\documentclass[12pt,a4paper]{amsart}  
\input{preamble.sty}
\begin{document}
\title{A $T1$ theorem for weakly singular integral operators}
\author{ Antti V. V\"ah\"akangas}
\address{Department of Mathematics and Statistics,
P.O. Box 68 (Gustaf H\"allstr\"omin katu 2b),
FI-00014 University of Helsinki, Finland}
\email{antti.vahakangas@helsinki.fi}
\subjclass[2000]{47B34; 47B38; 42B20}
\date{}

\begin{abstract}
We establish conditions in the spirit of the $T1$ theorem of
David and Journ\'e which guarantee the boundedness
of  $\nabla T$ on $L^p(\R^n)$,
where $T$ is an integral transformation and $1<p<\infty$. These 
 are natural
size and regularity conditions for the kernel of
the integral transformation, along
with the sharp condition $T1,T^t1\in\mathcal{I}^1(\mathrm{BMO})$.
A simple example satisfying these conditions
is the Riesz potential denoted by $\mathcal{I}^1$.
\end{abstract}
\maketitle

\section{Introduction}

The purpose of this paper is to explore boundedness properties of certain weakly singular integral operators, 
and to study their connections to so called almost diagonal potential operators.
Given a linear integral operator
\begin{equation}\label{olii}
Tf(x)=\int_{\R^n} K(x,y)f(y){\rm{d}}y,
\end{equation}
where $|K(x,y)|\le C_K|x-y|^{1-n}$,
one is often interested in conditions under which the operator 
\[
\nabla Tf(x) =\nabla_x \int_{\R^n} K(x,y)f(y){\rm{d}}y
\] has
a bounded extension to $L^p(\R^n)$ or to some
other relevant function space.
To state this more precisely, we assume that
$T:\mathcal{S}_0\to \mathcal{S}'/\mathcal{P}$
is continuous and that
$Tf$ is the regular distribution given by \eqref{olii} for
all  $f\in \mathcal{S}_0$; this is the space of Schwartz
functions with all vanishing moments and $\mathcal{S}'/\mathcal{P}$ is
its topological dual space.
Then we want to know when
$\partial_j T$ has the extension for all $j\in \{1,2,\ldots,n\}$.

Weakly singular integral operators emerge in the context 
of pseudodifferential operators with symbols in 
$\dot S^{-1}_{1,1}$
\cite{gtorres}. This
class of homogeneous symbols consists of smooth functions 
$a:\R^n\times(\R^n\setminus\{0\})\to \C$
satisfying
\begin{equation*}
|\partial^\alpha_\xi \partial^\beta_x a(x,\xi)|\le 
C_{\alpha,\beta}|\xi|^{-1-|\alpha|+|\beta|}
\end{equation*}
for all multi-indices $\alpha,\beta\in \N^n_0$.
The Pseudodifferential operator $T_a$ associated with symbol 
$a\in\dot{S}^{-1}_{1,1}$ is
the linear integral operator defined for $f\in\mathcal{S}_0$ 
pointwise by
\[
T_af(x)=\int_{\R^n}Êa(x,\xi)\hat f(\xi)
e^{{i} x\cdot \xi}{\rm{d}}\xi.
\]
It is known that such an operator has an alternative 
weakly singular representation
\eqref{olii} with highly smooth kernel $K_a$. 
On the other hand, boundedness properties of 
$\nabla T_a$ 
are not automatically guaranteed.
Singular integral methods are available
when treating with pseudodifferential operators as 
$\nabla T_a$ is  such an operator when the differentiation is
taken under the integral sign. 
As a simple example, well known Riesz transforms, that are
bounded operators on $L^p$-spaces for $p\in (1,\infty)$,
can be realized in such a form:
\[\mathcal{R}=(\mathcal{R}_1,\ldots,\mathcal{R}_n)=\nabla \mathcal{I}^1.\]
Here $\mathcal{I}^1$ stands for the first order Riesz potential,
which is a weakly singular integral operator associated to the kernel $K(x,y)=c_n|x-y|^{1-n}$.

Our
approach is to treat operators  without first
differentiating them under the integral sign. This approach is also
adopted in \cite{torres}, where certain results about 
boundedness of operators with singular kernels of different sizes are available.
Recent work \cite{t1dom} of  present author  deals directly with boundedness properties of weakly singular integral
operators on Euclidean domains, whereas
the present work is oriented towards 
sharp
connections between weakly singular integral operators and so called almost diagonal operators
of Frazier and Jawerth \cite{F-J1}.

Let us next quantify certain kernel classes and weakly singular integral operators.
A continuous function 
$K:\R^n\times \R^n\setminus \{(x,x)\}\to \C$ is
a standard kernel of order $-1$, if 
$|K(x,y)|\le C_K|x-y|^{1-n}$ for distinct $x,y\in\R^n$, and
there exists
$\delta\in (0,1]$ such that 
\begin{equation}\label{oliv}
\begin{split}
&|K(x+h,y)+K(x-h,y)-2K(x,y)|\\
&\qquad +|K(x,y+h)+K(x,y-h)-2K(x,y)|\le C_K
|h|^{1+\delta}|x-y|^{-n-\delta}
\end{split}
\end{equation}
when $0<|h|\le |x-y|/2$. 
If $T$ as in \eqref{olii} is associated with a 
standard 
kernel $K$ of order $-1$, we denote
$T\in \mathrm{SK}^{-1}(\delta)$,
and we say that $T$ is a weakly singular integral operator (of order $-1$).
 As an example,
$T_a\in\mathrm{SK}^{-1}(1)$ if $a\in \dot S^{-1}_{1,1}$.

Below is  our main result. Formulation
below is a rather concrete one, and we will later give 
formulations based on homogeneous Triebel--Lizorkin spaces.

\begin{thm}
Assume that $T\in\mathrm{SK}^{-1}(\delta)$.
Then the following three conditions 
are equivalent:
\begin{itemize}
\item $T1,T^t1\in \mathcal{I}^1({\mathrm{BMO}})$, where $\mathcal{I}^1$ is 
the
first order Riesz potential;
\item $\nabla T$ and $\nabla T^t$ have bounded
extension to $L^2$;
\item $\nabla T$ and $\nabla T^t$ have bounded
extension to $L^p$ for all $1<p<\infty$.
\end{itemize}
\end{thm}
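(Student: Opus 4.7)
The plan is to reduce the equivalences to the classical David--Journ\'e $T1$ theorem, applied not to $T$ itself but to the associated genuine Calder\'on--Zygmund operator $\nabla T$ (and separately to $\nabla T^t$). The implication (3)$\Rightarrow$(2) is immediate, so the real work is (2)$\Rightarrow$(1) and (1)$\Rightarrow$(3); both close the same circle via the classical theorem.

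The first step, and the conceptual bridge between the formulations, is the equivalence $T1\in\mathcal{I}^1(\mathrm{BMO})$ if and only if $\nabla T1\in\mathrm{BMO}$ (modulo polynomials), and similarly for $T^t$. Since $\nabla\circ\mathcal{I}^1$ coincides, up to dimensional constants, with the vector of Riesz transforms, which is an isomorphism on $\mathrm{BMO}$ modulo constants, any $f\in\mathcal{I}^1(\mathrm{BMO})$ satisfies $\nabla f\in\mathrm{BMO}$ and conversely. The identification $\nabla(T1)=(\nabla T)1$ in $\mathcal{S}'/\mathcal{P}$ follows from the continuity of $T:\mathcal{S}_0\to\mathcal{S}'/\mathcal{P}$ combined with the vanishing moments of functions in $\mathcal{S}_0$, via a dual integration by parts.

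The second step is to verify that $\nabla T$ fits the hypotheses of David--Journ\'e. I would check: (i) $\nabla T$ is associated to a standard Calder\'on--Zygmund kernel of order zero; (ii) $\nabla T$ satisfies the weak boundedness property; (iii) $(\nabla T)1,(\nabla T)^t 1\in\mathrm{BMO}$. For (i), the size bound $|K(x,y)|\lesssim|x-y|^{1-n}$ and the Zygmund-type condition \eqref{oliv} should yield, via a mollification and limiting argument, the pointwise estimates $|\nabla_x K(x,y)|\lesssim|x-y|^{-n}$ together with $\delta$-H\"older regularity in both variables. For (ii), the weak boundedness property should follow by testing against bumps supported in a ball of radius $r$ and integrating by parts once, exploiting the integrable bound on $|K|$. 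For (iii), the formal computation $(\nabla T)^t=-T^t\circ\nabla$ gives $(\nabla T)^t 1=-T^t(\nabla 1)=0\in\mathrm{BMO}$ trivially, so the only nontrivial assumption needed is $(\nabla T)1=\nabla(T1)\in\mathrm{BMO}$, which by the first step is precisely $T1\in\mathcal{I}^1(\mathrm{BMO})$. Applying the same argument to $\nabla T^t$ consumes the hypothesis on $T^t1$, and the $L^p$ conclusion in (3) then follows from the standard Calder\'on--Zygmund $L^p$ theory.

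The main technical obstacle I anticipate is step (i): upgrading the second-order, Zygmund-type regularity \eqref{oliv} of $K$ to pointwise Calder\'on--Zygmund estimates on $\nabla_x K$, while simultaneously identifying $\nabla T$ on $\mathcal{S}_0$ with the integral operator having kernel $\nabla_x K$. A naive argument is blocked by the fact that \eqref{oliv} is weaker than classical $C^1$ regularity, so the cleanest route is probably to approximate $K$ by smoother kernels and control the limit, or, working throughout in a distributional framework, to extract pointwise kernel bounds by testing against carefully chosen atoms. Once this analytic point is dispatched, the remainder of the argument is a routine application of the existing $T1$ machinery.
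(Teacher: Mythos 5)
Your strategy---apply the classical David--Journ\'e theorem to $\nabla T$ and $\nabla T^t$---is genuinely different from the paper's, which never invokes the classical $T1$ theorem at all: there, (1)$\Rightarrow$(3) is proved by subtracting paraproducts $\Pi_a,\Pi_b^t$ and applying a special $T1$ theorem based on the almost-diagonality of the matrix $\langle T(\psi_P),\phi_Q\rangle$ (Lemma \ref{ar1}, Theorem \ref{isolause}, Theorem \ref{full}), while (2)$\Rightarrow$(1) is proved directly on the $\phi$-transform side (Theorem \ref{t1conv}); this route also yields the stronger conclusion $\dot F^{\alpha 2}_p\to\dot F^{1+\alpha,2}_p$ for all $-1\le\alpha\le 0$, not only $L^p\to\dot W^{1,p}$. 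Your reduction could in principle be shorter, but as written it has a real gap precisely at the step you defer, and at one step you dismiss as trivial.

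First, the kernel upgrade. You assert that \eqref{ee} and \eqref{oliv} ``should yield'' $|\nabla_x K(x,y)|\le C|x-y|^{-n}$ with $\delta$-H\"older regularity \emph{in both variables}, but only half of this can be extracted by a regularity-embedding argument. Regularity in $x$: after replacing $\delta$ by any $\delta'<1$ (legitimate, since $\mathrm{SK}^{-1}(\delta)\subset\mathrm{SK}^{-1}(\delta')$), the second-difference condition \eqref{tt} on the ball $B(x,|x-y|/2)$, rescaled to unit scale and combined with the size bound, places $K(\cdot,y)$ in a H\"older--Zygmund class of non-integer order $1+\delta'$, which does give $\nabla_xK$ with $|\nabla_xK(x,y)|\le C|x-y|^{-n}$ and $\delta'$-H\"older continuity in $x$ with the correct inhomogeneous constants; note that at $\delta=1$ the second-difference condition alone does \emph{not} give $C^{1,1}$, so this reduction is not optional. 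Regularity of $\nabla_xK$ in $y$, however, does not follow from \eqref{kk} by any variable-by-variable embedding: \eqref{kk} controls differences of $K$ in $y$, not of $\nabla_xK$ in $y$, and mixed smoothness is exactly what is missing. One can manufacture it, e.g.\ by comparing $\nabla_xK(\cdot,y)$ with the difference quotients $t^{-1}\bigl(K(x+te_j,y)-K(x,y)\bigr)$ (error $\lesssim t^{\delta'}|x-y|^{-n-\delta'}$ from the $x$-H\"older bound), estimating the quotient's $y$-increment by $t^{-1}|h|\,|x-y|^{-n}$ via the first-order $y$-smoothness, and optimizing in $t$; this gives H\"older exponent $\delta'/(1+\delta')$ in $y$, which suffices for David--Journ\'e but must be proved---without it neither the $L^2$ criterion, nor the Calder\'on--Zygmund $L^p$ theory, nor your (2)$\Rightarrow$(1) necessity argument applies to $\nabla T$. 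Second, ``$(\nabla T)^t1=-T^t(\nabla 1)=0$ trivially'' is a formal manipulation: $1$ is not in the domain, and $\partial_j\eta_R\to 0$ in no topology in which $T^t$ acts continuously. The correct statement is that $\langle Tg,\partial_j\eta_R\rangle\to 0$ for every mean-zero test function $g$, which requires the decay $|Tg(y)|\lesssim |y|^{-n-\epsilon}$ obtained from the kernel's smoothness in the second variable; this step is not decorative, since it is what removes any hypothesis on $T^t1$ from the David--Journ\'e conditions for $\nabla T$ (the hypothesis on $T^t1$ being consumed only by $\nabla T^t$). Finally, your (2)$\Rightarrow$(1) direction also needs the compatibility of the paper's Definition \ref{t1} of $T1$ (limits against dilated Littlewood--Paley functions) with the cutoff/BMO definition of $(\nabla T)1$ implicit in the classical theorem, together with the fact that an $L^2$-bounded Calder\'on--Zygmund operator maps $L^\infty$ into $\mathrm{BMO}$; routine, but it should be argued rather than absorbed into ``closing the circle.'' With these points supplied, your reduction would give the stated equivalence, though not the full Triebel--Lizorkin scale nor the almost-diagonal characterization that the paper's machinery delivers.
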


As another result we characterize operators 
$T\in \mathrm{SK}^{-1}(\delta)$
satisfying the condition $T1=0=T^t1$ by a slightly modified
almost diagonality condition of Frazier and Jawerth \cite{F-J1}.

It is only natural that almost diagonality or almost 
orthogonality 
has a crucial role here. Corresponding decompositions of 
operators
appeared already
in the proof of the classical $T1$ theorem of David
and Journ\'e \cite{D-J}, and have been the standard technique to
deal with non-convolution type singular integral operators
on very general settings ever since 
\cite{fhw,han-sawyer,meyer-c,torres}.

To remind the reader we state the classical $T1$ theorem here.
Let
$T:\mathcal{S}\to \mathcal{S}'$ be a continuous linear operator that is associated 
with a kernel
$K:\R^n\times \R^n\setminus\{(x,x)\}\to \C$ so that 
\eqref{olii} holds for every $x\not\in {\mathrm{supp}} f$. 
Assume further that $K$ satisfies
$|K(x,y)|\le C_K|x-y|^{-n}$ for distinct $x,y\in\R^n$, and there is $\delta\in (0,1]$ such that
\[
|K(x+h,y)-K(x,y)|+|K(x,y+h)-K(x,y)|\le C_K|h|^\delta
|x-y|^{-n-\delta}
\]
whenever $0<|h|\le |x-y|/2$.  Then we say that $T$ is
associated with a standard kernel and denote this by 
$T\in\mathrm{SK}(\delta)$. 
Here is the classical $T1$ theorem \cite{D-J}:

\begin{thm}
Let $T\in \mathrm{SK}(\delta)$. Then the following
two conditions are equivalent:
\begin{itemize}
\item $T1,T^t1\in {\mathrm{BMO}}$ and $T$ has a certain weak boundedness 
property;
\item $T$ has a bounded extension to $L^2$ i.e., $T$ is a 
Calder\'on--Zygmund operator.
\end{itemize}
\end{thm}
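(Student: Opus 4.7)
The plan is to prove the two directions separately; the second (bounded extension $\Rightarrow$ BMO and WBP) is the easy one, and the first is the heart of the matter. If $T$ extends boundedly to $L^2$, the weak boundedness property is immediate by applying Cauchy--Schwarz to normalized bumps. To define $T1$ as an element of $\mathrm{BMO}$, on a cube $Q$ with center $x_Q$ one fixes $\eta\in C_c^\infty(2Q)$ with $\eta\equiv 1$ on $Q$ and sets
\[
T1\big|_{Q} := T\eta + \int_{(2Q)^c}\bigl(K(\cdot,y)-K(x_Q,y)\bigr)\,{\rm d}y;
\]
the second integral converges absolutely by the regularity of $K$ and its oscillation on $Q$ is controlled by $C_K$, while the oscillation of $T\eta$ is controlled by $\|T\|_{L^2\to L^2}$. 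The same argument applied to $T^t$ gives $T^t1\in\mathrm{BMO}$.

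For the hard direction the strategy is to first reduce to the case $T1=T^t1=0$ by subtracting paraproducts, then exploit the resulting cancellation by almost orthogonality. Given $b\in\mathrm{BMO}$, I would construct a paraproduct $\pi_b\in\mathrm{SK}(\delta)$ which is $L^2$-bounded and satisfies $\pi_b 1 = b$ and $\pi_b^t 1 = 0$. The standard construction takes smooth radial $\phi,\psi$ with $\int\phi=1$, $\int\psi=0$, normalized so that a Calder\'on reproducing formula holds in the $\psi$-direction, and sets
\[
\pi_b f(x) = \int_0^\infty \psi_t\ast\bigl((\psi_t\ast b)(\phi_t\ast f)\bigr)(x)\,\frac{{\rm d}t}{t},
\]
where $\psi_t(y)=t^{-n}\psi(y/t)$. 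The kernel estimates, the identities $\pi_b1=b$, $\pi_b^t1=0$, and the $L^2$-boundedness (equivalent to $b\in\mathrm{BMO}$ by the Carleson measure characterization) are routine. Then $\widetilde T := T - \pi_{T1} - (\pi_{T^t1})^t$ lies in $\mathrm{SK}(\delta)$, inherits WBP from $T$ and the paraproducts, and satisfies $\widetilde T 1 = (\widetilde T)^t 1 = 0$.

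It remains to show $\widetilde T$ is $L^2$-bounded. I would fix a smooth Littlewood--Paley resolution $\id = \sum_{j\in\Z}Q_j^2$, where $Q_j$ has mean-zero kernel $\psi_{2^{-j}}(x-y)$, and decompose $\widetilde T = \sum_{j,k}Q_j\widetilde T Q_k$. The goal is the almost-orthogonality estimate
\[
\|Q_j\widetilde T Q_k\|_{L^2\to L^2}\lesssim 2^{-\delta'|j-k|}
\]
for some $\delta'>0$, after which the Cotlar--Stein lemma closes the argument. For $|j-k|$ large the estimate follows from the kernel regularity of $\widetilde T$ combined with the vanishing moments of the higher-frequency $Q_\ell$ via a standard integration-by-parts / Taylor-expansion trick.

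The main obstacle is the near-diagonal regime $|j-k|\le C$, where the kernel regularity alone does not produce any smallness. Here one must test $\widetilde T$ directly on bumps adapted to comparable-size cubes and use all three cancellation inputs jointly: WBP handles the strictly local contribution, while $\widetilde T 1 = (\widetilde T)^t 1 = 0$ allow the standard molecular estimates on $\langle \widetilde T\psi_Q,\psi_{Q'}\rangle$ to hold uniformly when $\psi_Q,\psi_{Q'}$ are not of mean zero. Pinning down this case--quantitatively linking WBP plus the paraproduct cancellation to a bound on $\langle\widetilde T\psi_Q,\psi_{Q'}\rangle$ that is summable in the Cotlar--Stein sense--is the genuinely delicate step and where I would expect to spend most of the work.
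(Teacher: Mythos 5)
First, note that the paper does not prove this statement at all: it is the classical David--Journ\'e theorem, quoted in the Introduction purely for context and attributed to \cite{D-J}. So there is no ``paper's own proof'' to match your argument against; the closest internal analogue is the author's $T1$ theorem for $\mathrm{SK}^{-1}(\delta)$ operators, which is proved by a genuinely different route -- discrete paraproducts and almost-diagonal matrix estimates in the $\phi$-transform framework of \cite{F-J1} and \cite{Wang}, with no weak boundedness property needed and no Cotlar--Stein lemma. Your outline, by contrast, is the standard continuous-paraproduct-plus-almost-orthogonality proof of the classical theorem (as in \cite{D-J} and \cite{meyer-c}): the easy direction, the construction of $\pi_b$ with $\pi_b1=b$, $\pi_b^t1=0$, and the reduction to $\widetilde T$ with $\widetilde T1=\widetilde T^t1=0$ are all correct and standard.

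The gap is in the final step, and it is compounded by a misplacement of where the hypotheses are actually used. For Cotlar--Stein you do not need any smallness in the near-diagonal bands $|j-k|\le C$: there are finitely many of them, so a uniform bound suffices, and that uniform bound is exactly what the weak boundedness property together with the size condition on the kernel delivers. The genuinely delicate regime is the one you claim is handled by ``kernel regularity alone'': when $|j-k|$ is large and the high-frequency bump sits \emph{inside} the support of the low-frequency one, the Taylor/integration-by-parts trick fails because there is no separation of supports, and the low-frequency factor is essentially constant there -- this is precisely where $\widetilde T1=0$ (respectively $\widetilde T^t1=0$) must be invoked to kill the constant and recover the geometric decay $2^{-\delta'|j-k|}$. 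As written, your sketch defers the estimate you believe is hard (which in fact is the routine one) and asserts without justification the one that actually carries the cancellation hypotheses; until the matrix-coefficient estimate $|\langle \widetilde T\psi_Q,\psi_{Q'}\rangle|$ is established with the correct decay in both the scale ratio and the distance of the cubes, the $L^2$ boundedness of $\widetilde T$ -- and hence the hard implication -- is not proved. The missing estimates are of course classical and can be found in \cite{D-J} and \cite{meyer-c}, but they are the substance of the theorem rather than a technicality.
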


The main issue here is the $L^2$-boundedness.
The $L^p$-boundedness
for $1<p<\infty$ follows from the $L^2$-boundedness 
and this was known before the celebrated result
of David and Journ\'e. The additional weak boundedness property
is required because the kernel is in some sense too singular
near the diagonal $\{(x,x)\}\subset \R^n\times \R^n$
and therefore does not specify the operator $T$ uniquely.
This undesired phenomenon is ruled out in the context
of standard kernels of order $-1$.


Here is the organization of this paper. In
Section 2 we review homogeneous Triebel--Lizorkin
spaces along
with the $\phi$-transform. The almost diagonality
is also 
presented. Section 3 contains definitions of
potential operators and associated kernels.
Section 4 and Section 5 are devoted to the special
$T1$ theorem ($T1=0=T^t1$)
and to  its converse:
such special operators correspond to 
almost diagonal potential
operators on sequence spaces. Section
6 deals with the full $T1$ theorem
($T1,T^t1\in \mathcal{I}^1({\mathrm{BMO}})$).
We prove that this condition is not
only sufficient but also necessary for the boundedness 
properties of $T$.

\section{Preliminaries}



\subsection{Notation}

We denote $\N_0=\{0,1,2\ldots\}$ and $\N=\{1,2,\ldots\}$.
If $a,b\in \R$, then we denote
$a\vee b=\max\{a,b\}$ and $a\wedge b=\min\{a,b\}$.
We work on the Euclidean space $\R^n$ 
with $n\ge 2$.
We denote $X=X(\R^n)$ and usually omit
the ``$\R^n$" whenever
$X$ is a function space, whose members are defined on
$\R^n$.

Let $\nu\in \Z$, $y\in \R^n$, and $f:\R^n\to \C$. Then 
we denote
$f_\nu(x)=2^{\nu n} f(2^\nu x)$, $\tau_{y} f(x)=f(x-y)$,
and $\tilde f(x)=f(-x)$ for all $x\in \R^n$. 
The second order difference is denoted by
$\Delta_{y} f=\tau_{-y}f+\tau_y f-2 f$.
The Fourier transform is defined by
\[
\hat f(\xi)=\mathcal{F}(f)(\xi)=(2\pi)^{-n/2}\int_{\R^n}Êf(x)
e^{-{i} x\cdot \xi}{\mathrm d}x
\]
whenever $f\in L^1$. 

The Schwartz class
of rapidly decaying smooth functions,
equipped with its usual topology, is denoted by $\mathcal{S}$.
Its topological dual space is the
space of tempered distributions $\mathcal{S}'$.
Let $\mathcal{S}_0$ be the space of all Schwartz functions
$\phi\in\mathcal{S}$ such that 
$\partial^\alpha \hat \phi(0)=0$ for all
multi-indices $\alpha\in \N_0^n$. Then 
$\mathcal{S}_0$ is a closed subspace
of $\mathcal{S}$. If $\mathcal{S}_0'$ denotes
the topological dual space of $\mathcal{S}_0$, then
$\mathcal{S}_0'$ and $\mathcal{S}'/\mathcal{P}$ are isomorphic
\cite[5.1.2]{triebel1}. 
The Riesz $s$-potential, $s\in \R$, is defined by
\[\mathcal{I}^s(\phi)=\mathcal{F}^{-1}(|\xi|^{-s} \hat \phi).\]
These linear operators map $\mathcal{S}_0$
to itself continuously and 
extend
to $\mathcal{S}'/\mathcal{P}$ by duality:
if $\Lambda\in \mathcal{S}'/\mathcal{P}$ then
$\mathcal{I}^s(\Lambda)\in \mathcal{S}'/\mathcal{P}$, where
$\langle \mathcal{I}^s(\Lambda),\phi\rangle = 
\langle \Lambda,\mathcal{I}^s(\phi)\rangle$.
For $0<s<n$ holds the integral formulation
\[
\mathcal{I}^s(\phi)(x) = C_{n,s}\int_{\R^n} 
\frac{\phi(y)}{|x-y|^{n-s}}{\mathrm d}y
\]
if $\phi\in \mathcal{S}_0$.

The parameters $\nu \in \Z$ and $k\in \Z^n$ define a dyadic cube
\[
Q_{\nu k} =\{(x_1,\ldots,x_n)\in \R^n\,:\, k_i\le 2^\nu x_i <
k_i+1\text{ for  } i=1,\ldots, n\}.
\]
Denote the ``lower left-corner'' $\,2^{-\nu}k$ of 
$Q=Q_{\nu k}$ by
$x_Q$ and the sidelength $2^{-\nu}$ by $l(Q)$. 
Denote by $\mathcal{D}$ the set of all dyadic cubes in $\R^n$
and by $\mathcal{D}_\nu\subset \mathcal{D}$ the set of all dyadic 
cubes with sidelength $2^{-\nu}$.
Given a function $f\in L^2$, we
denote
\[
f_Q(x)=|Q|^{-1/2} f(2^\nu x - k) = |Q|^{1/2} f_\nu(x-x_Q).
\]
Notice that  $f_Q$ is $L^2$-normalized because
$||f_Q||_2 = ||f||_2$. 
The $L^2$-normalized characteristic
function of $Q\in\mathcal{D}$ is denoted by
$\tilde \chi_Q=|Q|^{-1/2}\chi_Q$, where $\chi_Q$
is the characteristic function of $Q$ (this overrides the
reflection defined above).

\subsection{Homogeneous Triebel--Lizorkin spaces}

For the convenience of the
reader we provide some details about homogeneous Triebel--Lizorkin spaces. These details
are mostly based on
\cite{F-J1}. For further properties about these
spaces, see the fundamental reference \cite{triebel1}. 

First of all, we need the Littlewood--Paley functions:

\begin{defn}
A function $\phi\in \mathcal{S}_0$ 
is a {\em Littlewood--Paley function\/} if it 
satisfies the following properties:
\begin{itemize}
\item $\hat \phi$ is a real valued radial function;
\item ${\mathrm{supp}} \hat \phi\subset \{\xi\in \R^n\,:\, 1/2< |\xi|< 2\}$;
\item $|\hat\phi(\xi)|\ge c>0$ if $3/5\le |\xi|\le 5/3$.
\end{itemize}
The function $\psi$ defined by
$\hat \psi=\hat\phi/\rho$, 
$\rho(\xi)=\sum_{\nu\in \Z} (\hat\phi(2^\nu \xi))^2$,
is a {\em Littlewood--Paley dual function\/} 
related to $\phi$. It is 
is a Littlewood--Paley function itself and we have
the identity
\[
\sum_{\nu\in \Z} \hat\phi(2^{-\nu} \xi)\hat \psi(2^{-\nu} \xi)
=1,\quad\text{if }\xi\not=0.
\]
\end{defn}

\begin{defn}\label{triebelit}
Fix a Littlewood--Paley function $\phi$.
Let  $1\le p,q\le\infty$ and $\alpha\in \R$. The
{\em homogeneous Triebel--Lizorkin space\/} $\dot F^{\alpha q}_p$ 
is the normed vector space of all 
$f\in \mathcal{S}'/\mathcal{P}$ such that
\[
||f||_{\dot F^{\alpha q}_p} = \bigg|\bigg| \bigg(\sum_{\nu \in \Z} 
\big(2^{\nu \alpha} |\phi_\nu \star f|\big)^q
\bigg)^{1/q}\bigg|\bigg|_{L^p} < \infty,
\]
if $p<\infty$, and
\[
||f||_{\dot F^{\alpha q}_\infty} 
=\sup_{P\in \mathcal{D}}\bigg(\frac{1}{|P|}\int_P 
\sum_{\nu=-\log_2 l(P)}^\infty
\big(2^{\nu\alpha} |\phi_\nu\star f(x)|\big)^q{\rm d}x\bigg)^{1/q}<
\infty.
\]
If $1\le p<q=\infty$, we use $\sup$-norms instead of $l^q$-norms.
Define also 
$\dot F^{\alpha \infty}_\infty=\sup_{\nu\in\Z} 2^{\nu\alpha}
||\phi_\nu\star f||_{L^\infty}$.
\end{defn}

The definition of homogeneous Triebel--Lizorkin
spaces does not depend on the choice of Littlewood--Paley 
function $\phi$;
for a proof, see \cite[Remark 2.6.]{F-J1}.
An easy consequence of this is the following:

\begin{thm}\label{isomet}
Let $\alpha,s\in \R$ and
$1\le p,q\le\infty$. Then $\mathcal{I}^s$ is an isomorphism 
of $\dot F^{\alpha q}_p$
onto $\dot F^{s+\alpha,q}_p$.
\end{thm}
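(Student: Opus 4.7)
The plan is to reduce everything to the independence of the $\dot F^{\alpha q}_p$-norm from the choice of Littlewood--Paley function (Remark 2.6 of \cite{F-J1}, cited just before the theorem). The key observation is that $\mathcal{I}^s$ is a Fourier multiplier by $|\xi|^{-s}$, and this multiplier interacts with the Littlewood--Paley pieces $\phi_\nu\star(\cdot)$ in a particularly simple way: it produces a factor $2^{-\nu s}$ times convolution with a different, but still admissible, Littlewood--Paley function.

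Concretely, I would define $\tilde\phi\in \mathcal{S}_0$ by $\hat{\tilde\phi}(\xi)=|\xi|^{-s}\hat\phi(\xi)$. A formal Fourier computation (legitimised by pairing against test functions in $\mathcal{S}_0$, where the duality definition of $\mathcal{I}^s$ is unambiguous) then gives the pointwise identity
\[
\phi_\nu\star \mathcal{I}^s(f)=2^{-\nu s}\,\tilde\phi_\nu\star f,\qquad \nu\in\Z,
\]
for every $f\in\mathcal{S}'/\mathcal{P}$, since on the Fourier side
\[
\hat\phi(2^{-\nu}\xi)\,|\xi|^{-s}=2^{-\nu s}\,|2^{-\nu}\xi|^{-s}\hat\phi(2^{-\nu}\xi)=2^{-\nu s}\,\hat{\tilde\phi}(2^{-\nu}\xi).
\]
Next I would check that $\tilde\phi$ is indeed a Littlewood--Paley function: $|\xi|^{-s}$ is smooth, real, radial and bounded above and below on the annulus $\{1/2<|\xi|<2\}$, so $\hat{\tilde\phi}$ is real, radial, supported in $\{1/2<|\xi|<2\}$, and satisfies $|\hat{\tilde\phi}(\xi)|\ge c'>0$ on $\{3/5\le |\xi|\le 5/3\}$; membership in $\mathcal{S}_0$ is automatic because $\hat{\tilde\phi}$ vanishes on a neighbourhood of the origin.

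Multiplying the displayed identity by $2^{\nu(s+\alpha)}$ gives $2^{\nu(s+\alpha)}|\phi_\nu\star\mathcal{I}^s(f)|=2^{\nu\alpha}|\tilde\phi_\nu\star f|$ pointwise, and this translates directly into equality of the $\dot F^{s+\alpha,q}_p$-norm of $\mathcal{I}^s(f)$, computed with $\phi$, and the $\dot F^{\alpha q}_p$-norm of $f$, computed with $\tilde\phi$; both the $L^p(\ell^q)$ formulation and the Carleson-type formulation for $p=\infty$ in Definition~\ref{triebelit} are unaffected by the swap because the factor $2^{-\nu s}$ is absorbed exactly into $2^{\nu(s+\alpha)}$. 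By independence of the Littlewood--Paley function, the latter is equivalent to $\|f\|_{\dot F^{\alpha q}_p}$, so $\mathcal{I}^s:\dot F^{\alpha q}_p\to \dot F^{s+\alpha,q}_p$ is bounded. Bijectivity is immediate: the same argument applied with $-s$ in place of $s$ shows that $\mathcal{I}^{-s}$ is bounded in the opposite direction, and $\mathcal{I}^s\circ \mathcal{I}^{-s}=\mathcal{I}^{-s}\circ\mathcal{I}^s=\mathrm{id}$ on $\mathcal{S}'/\mathcal{P}$ by the duality definition and the Fourier inversion.

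The only mildly delicate point — and the likely main obstacle to a clean write-up — is justifying the identity $\phi_\nu\star\mathcal{I}^s(f)=2^{-\nu s}\,\tilde\phi_\nu\star f$ at the level of $\mathcal{S}'/\mathcal{P}$ rather than for $f\in\mathcal{S}_0$. This is handled by writing $\phi_\nu\star f(x)=\langle f,\tau_x\widetilde{\phi_\nu}\rangle$ (which is a smooth function of $x$ because $\widehat{\phi_\nu}$ is compactly supported away from $0$, so the translated function lies in $\mathcal{S}_0$), and then using the duality definition $\langle \mathcal{I}^s(f),\tau_x\widetilde{\phi_\nu}\rangle=\langle f,\mathcal{I}^s(\tau_x\widetilde{\phi_\nu})\rangle$ to transfer the multiplier to the test function, where the Fourier calculation above is rigorous.
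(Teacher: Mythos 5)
Your argument is correct and is precisely the one the paper intends: the paper gives no written proof, stating only that the theorem is ``an easy consequence'' of the independence of the $\dot F^{\alpha q}_p$-norm from the Littlewood--Paley function, and your computation (transferring the multiplier $|\xi|^{-s}$ onto the test function to get $\phi_\nu\star\mathcal{I}^s f=2^{-\nu s}\tilde\phi_\nu\star f$ with $\hat{\tilde\phi}=|\xi|^{-s}\hat\phi$ still admissible, then invoking that independence and inverting with $\mathcal{I}^{-s}$) is exactly that consequence carried out in detail. No gaps; the delicate point about interpreting the identity on $\mathcal{S}'/\mathcal{P}$ is handled correctly via the pairing against $\tau_x\widetilde{\phi_\nu}\in\mathcal{S}_0$.
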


The homogeneous Triebel--Lizorkin spaces cover
many interesting function spaces; here
 $\approx$ indicates that the corresponding norms are 
equivalent:

\begin{itemize}
\item $\dot F^{02}_p \approx L^p$, where $1<p<\infty$;
\item $\dot F^{02}_1 \approx H^1$;
\item $\dot F^{0 2}_\infty \approx {\mathrm{BMO}}$;
\item $\dot F^{\alpha 2}_p \approx \dot W^{\alpha, p}$, where 
$1<p<\infty$.
\end{itemize}

Here $\dot W^{\alpha, p}$ is the homogeneous Sobolev space defined
as the space of all $f\in \mathcal{S}'/\mathcal{P}$ with
$\mathcal{I}^{-\alpha}f\in L^p$. This space is normed with
$||f||_{\dot W^{\alpha, p}}=||\mathcal{I}^{-\alpha}f||_p.$ 
Using the Riesz transforms, we have
$||f||_{\dot W^{1, p}}\approx ||\nabla f||_p$ for all $1<p<\infty$.
Detailed proofs or further references 
of these identifications can be found
in \cite{grafakos}.



\subsection{Sequence spaces and the $\phi$-transform}

Here we define the sequence spaces
$\dot f^{\alpha q}_p$ and $\phi$-transform following still
 the treatment
in \cite{F-J1}. 


\begin{defn}\label{sekvenssit}
Let $\alpha\in \R$, $1\le p,q\le \infty$, and
$\tilde \chi_Q=|Q|^{-1/2}\chi_Q$. 
The {\em sequence space\/} $\dot f^{\alpha q}_p$ is the collection
of all complex valued sequences $s=\{s_Q\}_{Q\in \mathcal{D}}$
such that 
\[
||s||_{\dot f^{\alpha q}_p} = \bigg|\bigg| \bigg(\sum_{Q\in
\mathcal{D}} \big(|Q|^{-\alpha/n}|s_Q|
\tilde\chi_Q\big)^q
\bigg)^{1/q}\bigg|\bigg|_{L^p} < \infty,
\]
if $1\le p,q<\infty$. If $1\le p<q=\infty$, we use $\sup$-norm 
instead of $l^q$-norm.
Also,
\[
||s||_{\dot f^{\alpha q}_\infty} = \sup_{P\in \mathcal{D}}
\bigg( \frac{1}{|P|} \sum_{Q\subset P} 
\big(|Q|^{-\alpha/n-1/2+1/q}|s_Q|\big)^q\bigg)^{1/q},
\]
if $1\le q<\infty$.
Finally we define 
$||s||_{\dot f^{\alpha\infty}_\infty} = \sup_{Q\in \mathcal{D}} 
\{|Q|^{-\alpha/n-1/2}|s_Q|\}$.
\end{defn}

\begin{rmk}\label{huhu}
If $1\le p<\infty$ and $s\in \dot f^{\alpha q}_p$, then
the series $\sum_{Q} s_Q\psi_Q$ converges unconditionally
in the weak* topology of $\mathcal{S}'/\mathcal{P}$; for a proof, see 
\cite[pp. 152--154]{Kyr}.
If also $1\le q<\infty$, then sequences with finite support are
dense in $\dot f^{\alpha q}_p$.
\end{rmk}

Then we define the $\phi$-transform and its left inverse. 

\begin{defn}
For a Littlewood--Paley function $\phi$, the $\phi$-transform
$S_\phi$ is the operator taking each $f\in \mathcal{S}'/
\mathcal{P}$ to
the sequence
$S_\phi f=\{(S_\phi f)_Q=\langle f,\phi_Q\rangle\,:\,Q\in 
\mathcal{D}\}$. Let $\psi=\psi_\phi$ be a dual Littlewood--Paley function
with respect to $\phi$.
Then the {\em left inverse $\phi$-transform\/} $T_\psi$ is the 
operator taking the sequence
$s=\{s_Q\}_{Q\in \mathcal{D}}$ to
$T_\psi s =\sum_Q s_Q\psi_Q$. 
\end{defn}

The technical definition of the 
summation associated with
$T_\psi$ is 
given by 
the following theorem. Its
detailed proof
can be found in \cite{F-J2}.

\begin{thm}\label{phi_ide}
Suppose that $\phi$ and $\psi=\psi_\phi$ is a dual pair of 
Littlewood--Paley functions. Assume that  
$f\in \mathcal{S}'/\mathcal{P}$.
Then for all $\gamma\in \mathcal{S}_0$ we have
\[
\langle f,\gamma\rangle =\sum_{\nu\in \Z}\sum_{k\in \Z^n} 
\langle f,\phi_{Q_{\nu k}}\rangle\langle \psi_{Q_{\nu k}},
\gamma\rangle.
\]
Here the inner sum converges absolutely for any fixed $\nu$.
In particular, we recover the {\em $\phi$-transform identity\/}:
$\id = T_\psi\circ S_\phi$
on $\mathcal{S}'/\mathcal{P}$; that is,
\[
f=\sum_{\nu\in \Z}\sum_{k\in \Z^n} \langle f,\phi_{Q_{\nu k}}
\rangle\psi_{Q_{\nu k}},
\]
with convergence in the weak* topology of
$\mathcal{S}'/\mathcal{P}$.
\end{thm}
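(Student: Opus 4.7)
The plan is to combine a continuous Calder\'on-type reproducing formula for $\gamma$ with a Poisson/sampling step that discretizes each scale $\nu$ into the inner sum over $k$. From the identity $\sum_{\nu\in\Z}\hat\phi(2^{-\nu}\xi)\hat\psi(2^{-\nu}\xi)=1$ for $\xi\ne 0$, together with the fact that $\hat\gamma$ vanishes to infinite order at the origin (as $\gamma\in\mathcal{S}_0$) and decays rapidly at infinity, the partial Fourier-side sums converge to $\hat\gamma$ in $\mathcal{S}$, yielding
\[
\gamma=\sum_{\nu\in\Z}\phi_\nu\star\psi_\nu\star\gamma
\]
with convergence in $\mathcal{S}_0$. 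Pairing with $f\in\mathcal{S}_0'\cong\mathcal{S}'/\mathcal{P}$ then gives $\langle f,\gamma\rangle=\sum_{\nu}\langle f,\phi_\nu\star\psi_\nu\star\gamma\rangle$, and the task reduces to identifying each summand with $\sum_{k\in\Z^n}\langle f,\phi_{Q_{\nu k}}\rangle\langle\psi_{Q_{\nu k}},\gamma\rangle$.

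Because $\hat\phi$ and $\hat\psi$ are real and radial, $\phi$ and $\psi$ are real and even, so directly from $\phi_{Q_{\nu k}}(x)=2^{-\nu n/2}\phi_\nu(x-2^{-\nu}k)$ one obtains
\[
\langle f,\phi_{Q_{\nu k}}\rangle=2^{-\nu n/2}(\phi_\nu\star f)(2^{-\nu}k),\qquad \langle\psi_{Q_{\nu k}},\gamma\rangle=2^{-\nu n/2}(\psi_\nu\star\gamma)(2^{-\nu}k).
\]
Thus the inner sum equals $2^{-\nu n}\sum_{k}(\phi_\nu\star f)(2^{-\nu}k)(\psi_\nu\star\gamma)(2^{-\nu}k)$, and this converges absolutely since $\psi_\nu\star\gamma$ is Schwartz while $\phi_\nu\star f$ is a smooth function of at most polynomial growth.

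The heart of the argument is Poisson summation on the lattice $2^{-\nu}\Z^n$. Both $\phi_\nu\star f$ and $\psi_\nu\star\gamma$ have Fourier support in the annulus $\{2^{\nu-1}<|\xi|<2^{\nu+1}\}$, so their pointwise product has Fourier support in $\{|\xi|<2^{\nu+2}\}$; since $2^{\nu+2}<2\pi\cdot 2^{\nu}$, this support lies strictly inside the fundamental cell dual to $2^{-\nu}\Z^n$. Only the zero-frequency term in the Poisson formula survives, so the Riemann-type sum equals the integral $\int(\phi_\nu\star f)(\psi_\nu\star\gamma)$, which is exactly $\langle f,\phi_\nu\star\psi_\nu\star\gamma\rangle$.

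The main obstacle will be the distributional setup, since $\phi_\nu\star f$ need not be Schwartz and one must justify exchanging the $f$-pairing with an infinite sum. A clean way to organize the proof is to first establish the pointwise identity
\[
(\phi_\nu\star\psi_\nu\star\gamma)(x)=2^{-\nu n}\sum_{k}(\psi_\nu\star\gamma)(2^{-\nu}k)\,\phi_\nu(x-2^{-\nu}k),
\]
by applying Poisson summation to the Schwartz function $y\mapsto\phi_\nu(x-y)(\psi_\nu\star\gamma)(y)$ at each fixed $x$, with convergence of the right-hand side in the Schwartz topology, and then apply the $\mathcal{S}_0'$-pairing with $f$ term by term. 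Summing over $\nu\in\Z$ then produces the claimed $\phi$-transform identity, and absolute convergence of the inner sum follows from the rapid decay of $\psi_\nu\star\gamma$ together with polynomial growth bounds on $\phi_\nu\star f$.
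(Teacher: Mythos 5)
Your argument is correct. Note that the paper does not prove this theorem itself but defers to Frazier--Jawerth \cite{F-J2}, and your route is essentially the same as theirs: the Calder\'on reproducing formula $\gamma=\sum_\nu\phi_\nu\star\psi_\nu\star\gamma$ (convergent in $\mathcal{S}$ because $\gamma\in\mathcal{S}_0$), followed by exact sampling of the band-limited pieces on the lattice $2^{-\nu}\Z^n$ -- you phrase this via Poisson summation and the observation $2^{\nu+2}<2\pi\,2^{\nu}$, whereas the cited proof expands the band-limited factor in a Fourier series on a fundamental cube, which is the same device. The remaining points you leave implicit (evenness of $\phi,\psi$ so that $\langle f,\phi_{Q}\rangle$ is a sample of $\phi_\nu\star f$, the fact that $\phi_\nu\star f$ is well defined on $\mathcal{S}'/\mathcal{P}$ and of polynomial growth since $\phi$ has vanishing moments, and the Fourier normalization constants) are routine and consistent with the paper's conventions.
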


Relying on the $\phi$-transform identity 
and on maximal operators,
Frazier and Jawerth prove the following technical result
in \cite[Theorem 2.2., Theorem 5.2.]{F-J1}:

\begin{thm}\label{jatkuv}
Let $\alpha\in \R$ and $1\le p,q\le\infty$. Then
operators $S_\phi:\dot F^{\alpha q}_p\to \dot f^{\alpha q}_p$ and
$T_\psi:\dot f^{\alpha q}_p\to \dot F^{\alpha q}_p$ are
bounded and $T_\psi\circ S_\phi$ is the identity on
$\dot F^{\alpha q}_p$. 
\end{thm}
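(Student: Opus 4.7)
The plan is to dispatch the identity part first and then prove the two norm bounds via almost-orthogonality and maximal-function methods. The identity $T_\psi\circ S_\phi=\id$ on $\dot F^{\alpha q}_p$ is essentially free from Theorem \ref{phi_ide}, which gives it in the weak$^*$ topology of $\mathcal{S}'/\mathcal{P}$; once continuity of both operators on $\dot F^{\alpha q}_p$ is in place, the identity persists there. So the substantive content lies in the two boundedness assertions.

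For $S_\phi$, the starting point is the observation that $\hat\phi$ being real and radial forces $\phi$ to be real and even, so that $\langle f,\phi_Q\rangle=|Q|^{1/2}(\phi_\nu\star f)(x_Q)$ for $Q=Q_{\nu k}$, whence
\[
|Q|^{-\alpha/n}|\langle f,\phi_Q\rangle|\,\tilde\chi_Q(x)=2^{\nu\alpha}|(\phi_\nu\star f)(x_Q)|\chi_Q(x).
\]
For $x\in Q$ one has $2^\nu|x-x_Q|\le\sqrt n$, so the right-hand side is dominated by the Peetre-type maximal function $\phi_\nu^{\ast,\lambda}f(x)=\sup_y|(\phi_\nu\star f)(y)|(1+2^\nu|x-y|)^{-\lambda}$. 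The standard Peetre majorization $\phi_\nu^{\ast,\lambda}f(x)\le C[M(|\phi_\nu\star f|^r)(x)]^{1/r}$, valid for $\lambda>n/r$, combined with the Fefferman--Stein vector-valued maximal inequality on $L^{p/r}(\ell^{q/r})$ applied with $r<\min(p,q)$, yields $\|S_\phi f\|_{\dot f^{\alpha q}_p}\lesssim\|f\|_{\dot F^{\alpha q}_p}$.

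For $T_\psi$, the key input is almost-orthogonality of $\phi_\mu$ and $\psi_Q$. The Fourier-support condition forces $\phi_\mu\star\psi_Q\equiv 0$ for $Q\in\mathcal{D}_\nu$ whenever $|\mu-\nu|\ge 2$, while for $|\mu-\nu|\le 1$ the Schwartz smoothness of $\phi,\psi$ produces, for any $N$,
\[
|(\phi_\mu\star\psi_Q)(x)|\le C_N|Q|^{1/2}\,2^{\nu n}(1+2^\nu|x-x_Q|)^{-N}.
\]
Inserting this into the definition of $\|T_\psi s\|_{\dot F^{\alpha q}_p}$ and pointwise dominating the resulting cube-tail convolution by a Hardy--Littlewood maximal function of $\bigl(\sum_{Q\in\mathcal{D}_\nu}|Q|^{-\alpha/n}|s_Q|\tilde\chi_Q\bigr)^r$ for a suitable $r$, a second application of Fefferman--Stein delivers $\|T_\psi s\|_{\dot F^{\alpha q}_p}\lesssim\|s\|_{\dot f^{\alpha q}_p}$.

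The main obstacle is the endpoint cases $p=\infty$ and $p=1$, where the Fefferman--Stein vector-valued maximal theorem is unavailable. For $p=\infty$ the $\dot F^{\alpha q}_\infty$ and $\dot f^{\alpha q}_\infty$ norms are Carleson-type averages, so the maximal-function step must be replaced by a direct Carleson-embedding estimate, exploiting the sharpened tail bound above to split cubes intersecting a fixed $P$ from those far from $P$. For $p=1$ one instead passes through an atomic/molecular decomposition of $\dot F^{\alpha q}_1$ and $\dot f^{\alpha q}_1$ and tests the operators on atoms. Once these endpoint issues are handled, the identity $T_\psi\circ S_\phi=\id$ on $\dot F^{\alpha q}_p$ follows as noted above, completing the proof.
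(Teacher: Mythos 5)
Your proposal is correct and follows essentially the same route as the paper's source: the paper does not prove Theorem \ref{jatkuv} itself but cites \cite[Theorems 2.2 and 5.2]{F-J1}, whose argument is exactly what you sketch --- the Peetre maximal function plus the Fefferman--Stein vector-valued maximal inequality for $S_\phi$, almost orthogonality of $\phi_\mu\star\psi_Q$ plus the same maximal machinery for $T_\psi$, and a separate Carleson-type estimate at $p=\infty$, with the identity $T_\psi\circ S_\phi=\id$ inherited from Theorem \ref{phi_ide}. One small simplification: the case $p=1$ needs no atomic/molecular detour, since choosing $r<\min(p,q)\le 1$ already gives $p/r>1$ and $q/r>1$, so your main Fefferman--Stein step covers all $1\le p<\infty$ directly.
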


\subsection{Almost diagonal potential operators}

We continue our recapitulation of \cite{F-J1} and move
to almost diagonal operators. 
{\em But there is a slight modification}\,:
we are interested in potential operators and this is reflected
in the following definitions and results. 
Let $\epsilon>0$, $P,Q\in \mathcal{D}$, and 
denote 
\[
\omega_{P,Q}(\epsilon)=
\frac{(l(P)
\wedge l(Q))^{1+(n+\epsilon)/2}}{(l(P)\vee l(Q))^{(n+\epsilon)/2}}
\bigg( 1 + \frac{|x_P-x_Q|}{l(P)\vee l(Q)}\bigg)^{-(n+\epsilon)}.
\]

\begin{defn}\label{adpmaar}
Let $T:\mathcal{S}_0\to \mathcal{S}'/\mathcal{P}$ be a continuous 
linear operator whose transpose 
$T^t:\mathcal{S}_0\to \mathcal{S}'/\mathcal{P}$ is also continuous; 
the transpose is defined by the dual operation 
$\langle Tf,g\rangle =\langle f,T^tg\rangle$.
Assume also that there exists $\epsilon>0$ such that
\[
\sup \bigg\{ 
\frac{|\langle T(\psi_P),\phi_Q\rangle|}{\omega_{P,Q}(\epsilon)}
\,:\,P,Q\in\mathcal{D}\bigg\}
<\infty.
\]
Then $T$ is an {\em almost diagonal potential operator\/}
and we denote  $T\in \mathrm{ADP}(\epsilon)$.
\end{defn}

\begin{rmk}
Note that $\mathrm{ADP}(\epsilon)$ is a vector
space and that $\mathrm{ADP}(\epsilon)
\subset \mathrm{ADP}(\epsilon')$ 
if $\epsilon\ge\epsilon'$.
Note that, a priori,
any given dual pair of Littlewood--Paley functions $\phi$ and 
$\psi_\phi$ may appear in the definition of 
$\mathrm{ADP}(\epsilon)$.
However,
later it turns out that the union 
$\cup_{\epsilon>0} \mathrm{ADP}(\epsilon)$ does not  depend
on the chosen dual pair of Littlewood--Paley functions.
\end{rmk}


Almost diagonal potential operators are bounded
operators $\dot F^{\alpha q}_p\to \dot F^{1+\alpha, q}_p$
for $1\le p,q<\infty$ and $-1\le \alpha\le 0$.
This follows from the following lemma, whose
proof
is a straightforward modification of \cite[pp. 54--55]{F-J1}.

\begin{lem}\label{apuva}
Assume  $-1\le \alpha \le 0$, $1\le p,q< \infty$,
$T\in \mathrm{ADP}(\epsilon)$, and
$f\in \mathcal{S}_0$. 
Define for every dyadic cube $Q\in \mathcal{D}$ 
\[
\big(A(S_\phi f)\big)_Q = \sum_{P\in \mathcal{D}} \langle 
T(\psi_P),\phi_Q\rangle (S_\phi f)_P.
\]
This sum  converges absolutely.
Moreover, the operator $A$ is
the discrete representative of $T$ in the sense that
$T=T_\psi\circ A\circ S_\phi$
and we have the estimate $||A(S_\phi f)||_{\dot f^{1+\alpha, q}_p} 
\le C||S_\phi f||_{\dot f^{\alpha q}_p}$.
\end{lem}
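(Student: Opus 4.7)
The plan is to transcribe the proof of the almost diagonality bound in Frazier--Jawerth~\cite[pp.~54--55]{F-J1}, with the one-derivative shift encoded in the extra factor $l(P)\wedge l(Q)$ appearing in $\omega_{P,Q}(\epsilon)$. I proceed in three steps: absolute convergence, the operator identity, and the norm estimate.

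For the first two steps I would combine the defining bound $|\langle T\psi_P,\phi_Q\rangle|\le C\omega_{P,Q}(\epsilon)$ with the rapid (Schwartz-like) decay of the coefficients $(S_\phi f)_P=\langle f,\phi_P\rangle$ for $f\in\mathcal{S}_0$: for every $N$ one has $|(S_\phi f)_P|\lesssim_N\min\{l(P)^N,l(P)^{-N}\}(1+|x_P|)^{-N}$, which easily dominates the polynomial growth of $\omega_{P,Q}(\epsilon)$ for any fixed $Q$ and yields absolute convergence. For the identity, apply Theorem~\ref{phi_ide} with $T^t\phi_Q\in\mathcal{S}'/\mathcal{P}$ in the first slot and $f\in\mathcal{S}_0$ in the second to get
\[
(S_\phi Tf)_Q=\langle Tf,\phi_Q\rangle=\langle T^t\phi_Q,f\rangle=\sum_P\langle T^t\phi_Q,\phi_P\rangle\langle\psi_P,f\rangle=\sum_P\langle T\psi_P,\phi_Q\rangle(S_\phi f)_P=(A(S_\phi f))_Q.
\]
Composing with $T_\psi$ and using the $\phi$-transform identity $T_\psi\circ S_\phi=\id$ from the same theorem gives $T=T_\psi\circ A\circ S_\phi$.

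For the norm estimate, the key observation is a direct computation of the sequence-level kernel $k_{Q,P}:=|Q|^{-(1+\alpha)/n}\omega_{P,Q}(\epsilon)|P|^{\alpha/n}$. Writing $l(P)=2^{-\mu}$ and $l(Q)=2^{-\nu}$, one obtains
\[
k_{Q,P}=\begin{cases}2^{(\nu-\mu)(1+\alpha+(n+\epsilon)/2)}\bigl(1+2^\nu|x_P-x_Q|\bigr)^{-(n+\epsilon)},&\mu\ge\nu,\\[2pt]
2^{(\mu-\nu)((n+\epsilon)/2-\alpha)}\bigl(1+2^\mu|x_P-x_Q|\bigr)^{-(n+\epsilon)},&\mu<\nu.\end{cases}
\]
For $-1\le\alpha\le 0$ with $\epsilon>0$ both exponents $1+\alpha+(n+\epsilon)/2$ and $(n+\epsilon)/2-\alpha$ are strictly positive, so $k_{Q,P}$ decays geometrically in $|\mu-\nu|$ with summable spatial tails. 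Setting $s=S_\phi f$, the inner sum $\sum_{P\in\mathcal{D}_\mu}k_{Q,P}\cdot|P|^{-\alpha/n}|s_P|$ is then pointwise dominated on $Q\in\mathcal{D}_\nu$ by $\bigl(M[\sum_{P\in\mathcal{D}_\mu}(|P|^{-\alpha/n}|s_P|\tilde\chi_P)^r]\bigr)^{1/r}$ for any $r\in(0,1)$ with $r<\min(p,q)$ close enough to $1$; this is the standard trick converting a spatial tail $(1+\cdot)^{-(n+\epsilon)}$ into a Hardy--Littlewood maximal function raised to $1/r$. Summing geometrically in $\mu$, taking the $\ell^q$-norm in $\nu$, and invoking the Fefferman--Stein vector-valued maximal inequality produces the bound $||As||_{\dot f^{1+\alpha,q}_p}\le C||s||_{\dot f^{\alpha q}_p}$.

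The main technical work sits in this last step: the Fefferman--Stein inequality must be applied with $r$ chosen compatibly with $p,q$, and the geometric decay in $|\mu-\nu|$ must be tracked uniformly across scales. This is exactly the Frazier--Jawerth bookkeeping, only with the exponents reweighted to reflect the $\dot f^{\alpha q}_p\to\dot f^{1+\alpha,q}_p$ shift; no new ideas are needed beyond the observation that the extra factor $l(P)\wedge l(Q)$ in $\omega_{P,Q}(\epsilon)$ is precisely what produces this derivative gain.
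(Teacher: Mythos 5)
Your proposal is correct and takes the same route the paper intends, since the paper gives no written proof but simply defers to the argument of Frazier--Jawerth \cite[pp.~54--55]{F-J1}: your kernel bookkeeping is right (for $l(P)\le l(Q)$ one gets decay $2^{(\nu-\mu)(1+\alpha+(n+\epsilon)/2)}$, for $l(P)>l(Q)$ decay $2^{(\mu-\nu)((n+\epsilon)/2-\alpha)}$, both exponents positive on $-1\le\alpha\le 0$), and choosing $n/(n+\epsilon)<r<1$ close enough to $1$ makes the maximal-function losses absorbable and keeps $p/r,q/r>1$ so that the Fefferman--Stein inequality applies for all $1\le p,q<\infty$. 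One small repair is needed in your displayed derivation of $(S_\phi Tf)_Q=(A(S_\phi f))_Q$: Theorem~\ref{phi_ide} as stated gives $\langle T^t\phi_Q,f\rangle=\sum_P\langle T^t\phi_Q,\phi_P\rangle\langle\psi_P,f\rangle$, whose coefficients are $\langle T\phi_P,\phi_Q\rangle$ paired with $\langle\psi_P,f\rangle$, not $\langle T\psi_P,\phi_Q\rangle(S_\phi f)_P$ as you wrote; you should instead invoke the expansion with the roles of $\phi$ and $\psi$ exchanged, $\langle T^t\phi_Q,f\rangle=\sum_P\langle T^t\phi_Q,\psi_P\rangle\langle\phi_P,f\rangle$, which is legitimate because $\phi$ is itself the Littlewood--Paley dual function of $\psi$ (one checks $\rho_\psi=1/\rho$), after which your chain and the composition with $T_\psi\circ S_\phi=\id$ on $\mathcal{S}'/\mathcal{P}$ go through exactly as you intend.
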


As a consequence, we have the following:

\begin{cor}\label{jatkuus}
Let $-1\le \alpha\le 0$, $1\le p,q<\infty$, and 
$T\in \mathrm{ADP}(\epsilon)$. Then 
the operators $T$ and $T^t$ have unique
bounded extensions $\dot F^{\alpha q}_p\to 
\dot F^{1+\alpha,q}_p$. 
\end{cor}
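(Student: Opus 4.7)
The corollary is essentially a diagram chase through Lemma \ref{apuva} and Theorem \ref{jatkuv}. For $f\in\mathcal{S}_0$ I would apply, in order, the bound $\|S_\phi f\|_{\dot f^{\alpha q}_p}\lesssim\|f\|_{\dot F^{\alpha q}_p}$ from Theorem \ref{jatkuv}, the sequence-level estimate $\|A(S_\phi f)\|_{\dot f^{1+\alpha,q}_p}\lesssim\|S_\phi f\|_{\dot f^{\alpha q}_p}$ of Lemma \ref{apuva}, and then the bound $\|T_\psi t\|_{\dot F^{1+\alpha,q}_p}\lesssim\|t\|_{\dot f^{1+\alpha,q}_p}$, again from Theorem \ref{jatkuv}. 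Since Lemma \ref{apuva} identifies $T$ with the composition $T_\psi\circ A\circ S_\phi$ on $\mathcal{S}_0$, these three bounds chain into the a priori estimate $\|Tf\|_{\dot F^{1+\alpha,q}_p}\lesssim\|f\|_{\dot F^{\alpha q}_p}$ for every $f\in\mathcal{S}_0$.

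To upgrade this to a \emph{unique} bounded extension I would invoke density. Since $p,q<\infty$, Remark \ref{huhu} gives density of finitely supported sequences in $\dot f^{\alpha q}_p$. Combined with the $\phi$-transform identity $T_\psi\circ S_\phi=\id$ on $\dot F^{\alpha q}_p$ and the continuity of $T_\psi$ from Theorem \ref{jatkuv}, this shows that every $f\in\dot F^{\alpha q}_p$ is the norm limit of finite sums $\sum s_Q\psi_Q\in\mathcal{S}_0$. Hence $\mathcal{S}_0\cap\dot F^{\alpha q}_p$ is norm dense in $\dot F^{\alpha q}_p$, and the bounded extension of $T$ is therefore unique.

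The only genuinely nontrivial point is the $T^t$ case, since Definition \ref{adpmaar} is formulated with respect to a fixed dual Littlewood--Paley pair and the independence of $\mathrm{ADP}(\epsilon)$ from that choice is only advertised for later in the paper. The way around this is a symmetry observation: $\rho(\xi)=\sum_\nu\hat\phi(2^\nu\xi)^2$ is invariant under dyadic dilations, so a short computation gives $\sum_\nu\hat\psi(2^\nu\xi)^2=\rho(\xi)^{-1}$, and consequently $\phi$ is itself the dual Littlewood--Paley function associated with $\psi$. By transposition $|\langle T^t\psi_P,\phi_Q\rangle|=|\langle T\phi_Q,\psi_P\rangle|$, and together with the manifest symmetry of $\omega_{P,Q}(\epsilon)$ in $P$ and $Q$ this is exactly the ADP estimate for $T$ relative to the reversed pair $(\psi,\phi)$. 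Rerunning the proof of Lemma \ref{apuva} with $\phi$ and $\psi$ interchanged then produces a factorization $T^t=T_\phi\circ B\circ S_\psi$ with a bounded discrete operator $B:\dot f^{\alpha q}_p\to\dot f^{1+\alpha,q}_p$, and the chain of estimates and density argument from the first two paragraphs apply verbatim.
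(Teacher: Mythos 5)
Your proposal is correct and follows essentially the same route the paper intends: the corollary is stated there as an immediate consequence of Lemma \ref{apuva} combined with Theorem \ref{jatkuv}, and your density/uniqueness step via Remark \ref{huhu} and the identity $T_\psi\circ S_\phi=\id$ is exactly the standard way to complete it. One correction in your $T^t$ paragraph: the factorization $T^t=T_\phi\circ B\circ S_\psi$ needs a bound on the entries $\langle T^t(\phi_P),\psi_Q\rangle=\langle T(\psi_Q),\phi_P\rangle$, which the hypothesis supplies at once because $\omega_{P,Q}(\epsilon)=\omega_{Q,P}(\epsilon)$; the quantities you actually display, $\langle T^t(\psi_P),\phi_Q\rangle=\langle T(\phi_Q),\psi_P\rangle$, are neither controlled by Definition \ref{adpmaar} nor the ones required, so that sentence should be rewritten with the roles of $\phi$ and $\psi$ in the pairing swapped. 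Your observation that $(\psi,\phi)$ is again a dual Littlewood--Paley pair, via $\sum_\nu(\hat\psi(2^\nu\xi))^2=\rho(\xi)^{-1}$, is correct and is what justifies invoking Theorem \ref{jatkuv} for $S_\psi$ and $T_\phi$.
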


\section{Potential operators and associated standard kernels}

All the required machinery is now at our disposal and
we begin with the main theme of this paper, namely
with first order potential operators associated with
standard kernels of order $-1$.
First of all, we use the following terminology corresponding
to Calder\'on--Zygmund operators in the Introduction: 

\begin{defn}\label{potopp}
We say that a linear operator
$T:\mathcal{S}_0\to \mathcal{S}'/\mathcal{P}$
is a {\em potential operator\/} if
it has a bounded extension
$\dot W^{\alpha, 2}\to \dot W^{1+\alpha,2}$
for all $-1\le\alpha\le 0$.
\end{defn}

Next we define the classes $\mathrm{SK}^{-1}(\delta)$, already mentioned
in the Introduction. Their 
role resembles 
that of the standard kernel classes $\mathrm{SK}(\delta)$
from the Introduction:

\begin{defn}\label{potop}
Let 
$T:\mathcal{S}_0\to \mathcal{S}'/\mathcal{P}$ be a linear 
operator
such that
\[
Tf(x) =\int_{\R^n} K(x,y)f(y){\rm{d}}y.
\]
Assume that the kernel $K:\R^n\times \R^n\setminus\{(x,x)\}\to \C$ 
is continuous and satisfies the following conditions
for some $0<\delta\le 1$ and whenever $|h|\le |x-y|/2$
\begin{eqnarray}
&&|K(x,y)|\le C_K|x-y|^{1-n},\label{ee}\\
&&|K(x+h,y)+K(x-h,y)-2K(x,y)|\le 
C_K\frac{|h|^{1+\delta}}{|x-y|^{n+\delta}},
\label{tt}\\
&&|K(x,y+h)+K(x,y-h)-2K(x,y)|\le 
C_K\frac{|h|^{1+\delta}}{|x-y|^{n+\delta}}\label{kk}.
\end{eqnarray}
Then $T$ is associated with a
{\em standard kernel $K$ of order $-1$\/}.
We denote this by $T\in \mathrm{SK}^{-1}(\delta)$.
\end{defn}

\begin{rmk}
Note that 
$Tf$ is well defined for  $f\in\mathcal{S}_0$ because of
property \eqref{ee}. Also,
$\mathrm{SK}^{-1}(\delta)$ is a vector space and $\mathrm{SK}^{-1}(\delta)
\subset \mathrm{SK}^{-1}(\delta')$
if $\delta\ge \delta'$.
The transpose $T^t$ of an operator $T\in\mathrm{SK}^{-1}(\delta)$ is 
defined by
$\langle T^t f,g\rangle = \langle f,T g\rangle$. 
It is associated with the kernel
$K^t:(x,y)\mapsto K(y,x)$, which also possesses  the properties
\eqref{ee}--\eqref{kk}. The transpose
satisfies $T^t\in\mathrm{SK}^{-1}(\delta)$.
\end{rmk}

Let us 
also clarify the terminology concerning ``$T1$'':

\begin{defn}\label{t1}
Fix a Littlewood--Paley function $\phi$.
Let $T\in\mathrm{SK}^{-1}(\delta)$,  $f\in \mathcal{S}'/\mathcal{P}$,
and denote $\eta^j(x)=\phi(x/2^j)/\phi(0)$.
Assume that for all dyadic cubes $Q\in \mathcal{D}$ we have
$\lim_{j\to\infty}\langle T(\eta^j),\phi_Q\rangle 
= \langle f,\phi_Q\rangle$
and the same with $\phi$ replaced in the
brackets by the dual function $\psi=\psi_\phi$. Then we denote  
$T1=f$.
\end{defn}

\begin{rmk}
If $T1$ exists, then Theorem \ref{phi_ide}
implies that it is uniquely defined in $\mathcal{S}'/\mathcal{P}$
with respect to a fixed dual pair of Littlewood--Paley functions.
On the other hand, the  definition of $T1$ seems at first sight 
to depend on the Littlewood--Paley
function $\phi$ and its dual function $\psi$.
However, we will not address
this dependence unless 
explicitly needed.  Instead, we prefer to work with a fixed
dual pair of Littlewood--Paley functions.
\end{rmk}

The central question we shall address in this paper is:
{\em Under what conditions $T\in\mathrm{SK}^{-1}(\delta)$ is a potential
operator?}
It turns out that this is
equivalent to the requirement $T1,T^t1\in\dot F^{12}_\infty$.

Let us discuss the implications of condition \eqref{ee} occuring 
in the definition of standard kernels of order $-1$.
It also allows the operator  $T\in \mathrm{SK}^{-1}(\delta)$ to inherit 
all the properties of 
Riesz potential that involve no cancellation but size only. 
To be more precise, let $1\le p<q<\infty$
satisfy $1/p-1/q=1/n$. Then it is well known 
\cite[pp. 415--416]{grafakos} that for all $f\in\mathcal{S}_0$
and $x\in \R^n$,
we have 
\begin{equation}\label{maximal}
|Tf(x)|\le C_K\mathcal{I}^1(|f|)(x) \le C_{K,n,p} 
M(f)(x)^{p/q}||f||^{1-p/q}_p.
\end{equation}
This maximal inequality
allows us to conclude
that $T$ and $T^t$ are continuous
$\mathcal{S}_0\to \mathcal{S}'/\mathcal{P}$. 

Another consequence of \eqref{ee} is the following
uniqueness result:

\begin{prop}\label{uniq}
Let $T\in\mathrm{SK}^{-1}(\delta)$ be
associated with standard kernel $K$ of order $-1$.
If $Tf=0$ in $\mathcal{S}'/\mathcal{P}$ for all
$f\in\mathcal{S}_0$, then $K(x,y)=0$ for
every $(x,y)\in \R^n\times \R^n\setminus\{(x,x)\}$.
\end{prop}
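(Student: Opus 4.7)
The strategy is to first upgrade the hypothesis ``$Tf=0$ in $\mathcal{S}'/\mathcal{P}$'' to pointwise vanishing of $Tf(x)$, and then to dualize in the first variable in order to show that $K(\,\cdot\,,y_0)$ vanishes on every neighborhood disjoint from $y_0$.

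First I would fix $f\in\mathcal{S}_0$ and regard $Tf$ in two ways simultaneously: as the regular distribution given by the continuous function
\[
x\mapsto\int_{\R^n}K(x,y)f(y)\,{\rm d}y,
\]
which is well defined thanks to \eqref{ee} and the rapid decay of $f$, and as the zero element of $\mathcal{S}'/\mathcal{P}$. The latter forces $Tf$ to agree, as an element of $\mathcal{S}'$, with some polynomial; being a continuous function, $Tf$ is therefore a polynomial pointwise. On the other hand, the size bound \eqref{maximal} gives $|Tf(x)|\le C_K\mathcal{I}^1(|f|)(x)$, and a routine split of the defining integral into $\{|y|\le|x|/2\}$ and $\{|y|>|x|/2\}$ shows $\mathcal{I}^1(|f|)(x)=O(|x|^{1-n})$ as $|x|\to\infty$. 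A polynomial that tends to zero at infinity vanishes identically, so $\int_{\R^n} K(x,y)f(y)\,{\rm d}y=0$ for every $x\in\R^n$ and every $f\in\mathcal{S}_0$.

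Next I would fix distinct points $x_0,y_0\in\R^n$, set $r=|x_0-y_0|/3$, and pick an arbitrary $\psi\in C_c^\infty(B(x_0,r))$. Multiplying the pointwise identity by $\psi(x)$, integrating, and swapping the order of integration (Fubini is legitimate since $\psi$ has compact support, $f$ decays rapidly, and $|K(x,y)|\le C_K|x-y|^{1-n}$ is locally integrable for $n\ge 2$) one arrives at
\[
\int_{\R^n} g(y)f(y)\,{\rm d}y=0\quad\text{for every }f\in\mathcal{S}_0,\qquad g(y):=\int_{\R^n}\psi(x)K(x,y)\,{\rm d}x.
\]
A size check shows $g\in\mathcal{S}'$: it is locally integrable and satisfies $|g(y)|=O(|y|^{1-n})$ at infinity. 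The annihilator of $\mathcal{S}_0$ inside $\mathcal{S}'$ is precisely $\mathcal{P}$ (a restatement of the isomorphism $\mathcal{S}_0'\cong\mathcal{S}'/\mathcal{P}$ already invoked in Section~2), so $g$ must be a polynomial; since it decays at infinity, $g\equiv 0$.

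Evaluating at $y=y_0\notin\mathrm{supp}\,\psi$ yields $\int\psi(x)K(x,y_0)\,{\rm d}x=0$ for every $\psi\in C_c^\infty(B(x_0,r))$. Because $x\mapsto K(x,y_0)$ is continuous on $B(x_0,r)$ (this ball avoids the diagonal), the usual variational argument forces $K(\,\cdot\,,y_0)\equiv 0$ on $B(x_0,r)$; in particular $K(x_0,y_0)=0$. Since $x_0\ne y_0$ were arbitrary, $K\equiv 0$ off the diagonal. The main obstacle is the first step: one has to carefully combine the distributional information (``$Tf$ is a polynomial'') with the pointwise information (``$Tf$ is a continuous function decaying at infinity''). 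Once that upgrade is in place, the remainder is a clean duality argument followed by the fundamental lemma of the calculus of variations.
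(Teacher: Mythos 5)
Your argument is correct, but it follows a genuinely different route from the paper's. The paper tests $T$ only against the countable family $\{\phi_Q\}_{Q\in\mathcal{D}}$: the polynomial ambiguity in $T\phi_Q$ is killed not by decay at infinity but by the $L^{2n}$ bound coming from the maximal inequality \eqref{maximal}; one then collects a single null set $A$ outside of which $T\phi_Q(x)=0$ for all $Q$, invokes the $\phi$-transform identity (Theorem \ref{phi_ide}) to conclude that $y\mapsto K(x,y)$ vanishes in $\mathcal{S}'/\mathcal{P}$ for such $x$, kills the resulting polynomial with the size bound \eqref{ee}, and finally uses the continuity of $K$ to pass from almost every $x$ to all off-diagonal points. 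You instead kill the polynomial via the decay $\mathcal{I}^1(|f|)(x)=O(|x|^{1-n})$, obtain the pointwise identity $\int K(x,y)f(y)\,{\rm d}y=0$ for every $x$ and $f\in\mathcal{S}_0$, and then dualize in the $x$-variable against $C_c^\infty$ functions supported away from $y_0$, using twice the fact that the annihilator of $\mathcal{S}_0$ in $\mathcal{S}'$ is $\mathcal{P}$, plus the fundamental lemma of the calculus of variations. This avoids Theorem \ref{phi_ide} and the measure-zero bookkeeping entirely, at the price of a slightly longer detour and a few routine continuity checks you assert rather than prove (continuity of $x\mapsto Tf(x)$ and of $g$, needed to make the pointwise statements legitimate; note also that $g\equiv 0$ is first obtained only a.e., and evaluation at $y_0$ uses continuity of $g$ near $y_0$). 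In fact, once you have $\int K(x,y)f(y)\,{\rm d}y=0$ for every $x$ and every $f\in\mathcal{S}_0$, your second dualization is not strictly needed: for fixed $x$ the function $y\mapsto K(x,y)$ is a tempered distribution annihilating $\mathcal{S}_0$, hence a polynomial, which \eqref{ee} forces to vanish — this shortcut brings your proof very close to the second half of the paper's.
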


\begin{proof}
Let $Q\in\mathcal{D}$. Then 
$T\phi_Q=0$
in $\mathcal{S}'/\mathcal{P}$. 
Therefore
there exists a polynomial
$P_Q$ such that
$T\phi_Q(x)=P_Q(x)$
for almost every $x\in \R^n$.
Applying \eqref{ee} and estimate \eqref{maximal}, we have for
almost every $x\in \R^n$
\[
|P_Q(x)|\le C_{K}\int_{\R^n} \frac{|\phi_Q(y)|}{|x-y|^{n-1}}
{\rm d}y\le C_{K,n,p}M\phi_Q(x)^{(n-1)/n}||\phi_Q||^{1/n}_1\in 
L^{2n}({\rm d}x).
\]
Thus $P_Q\equiv0$. 
Denote 
$A= \cup_{Q\in\mathcal{D}} \{x\,:\,T\phi_Q(x)\not=0\}$. Then
$m_n(A)=0$. Fix $x_A\in \R^n\setminus A$. Then
$T\phi_Q(x_A)=\int_{\R^n} K(x_A,y)\phi_Q(y){\rm{d}}y = 0$
for every $Q\in \mathcal{D}$. Using Theorem \ref{phi_ide}
we see that $y\mapsto K(x_A,y)=0$ in $\mathcal{S}'/\mathcal{P}$.
Using  \eqref{ee} we see that the formula $y\mapsto K(x_A,y)$ 
induces a tempered
distribution; thus $K(x_A,y)=P_{x_A}(y)$ for every
$y\in \R^n\setminus\{x_A\}$, where $P_{x_A}$ is a polynomial 
depending on $x_A$.  
Using \eqref{ee} again, we
have
$|P_{x_A}(y)|\le C_K|x_A-y|^{1-n}$ for every 
$y\in \R^n\setminus\{x_A\}$. Therefore $P_{x_A}\equiv 0$
and  $K(x_A,y)=0$ for every $y\in \R^n\setminus\{x_A\}$.
Because $m_n(A)=0$
and $K$ is continuous, we see that $K\equiv 0$.
\end{proof}

\subsection{Motivation for further conditions}

Recall the homogeneous pseudodifferential operators 
in the
Introduction, 
or see \cite{gtorres} for more details. These operators
are of interest to us
mainly because
$T_a\in \mathrm{SK}^{-1}(1)$ if $a\in \dot{S}^{-1}_{1,1}$;
see \cite[p. 271]{stein:1993}
for the proof of a similar relation.
These operators provide us motivation for further conditions:
we present a construction of 
a symbol $a\in \dot S^{-1}_{1,1}$ 
such 
that $T_a$ has no bounded extension
$\dot F^{-1 2}_2\to \dot F^{02}_2$.
This construction is a 
small modification of \cite[pp. 272--273]{stein:1993}.

Fix a Littlewood--Paley function $\phi$ with
the additional properties 
\[
\mathcal{F}(\phi_{-1})|B(e_1,\epsilon)\equiv 0 
\equiv \mathcal{F}(\phi_{1})|B(e_1,\epsilon)
\]
and  $\hat \phi|B(e_1,\epsilon) \equiv 1$ for
some $\epsilon>0$ (here $e_1$ is the first base vector on 
$\R^n$). 
Define then a symbol as follows
\[
a(x,\xi)=\sum_{\nu\in\Z} 2^{-\nu} 
e^{-{i} 2^{\nu}e_1\cdot x}
\hat \phi(2^{-\nu}\xi).
\]
It is easy to verify that $a\in \dot S^{-1}_{1,1}$
and that
for $f\in \mathcal{S}_0$ we have the representation
\[
T_af(x) = (2\pi)^{-n/2}\sum_{\nu\in \Z} 2^{-\nu}
e^{-{i} 2^\nu e_1\cdot x} (\phi_\nu\star f)(x).
\]

For the proof of the following unboundedness result see the references above. 
Note, however, that
$T_a$ is bounded operator
$\dot F^{0,2}_2\to \dot F^{12}_2$, see
Theorem 1.1 \cite{gtorres}.

\begin{prop}\label{vesimII}
Let $a\in \dot S^{-1}_{1,1}$ be as above. Then
the operator $T_a\in \mathrm{SK}^{-1}(1)$ has no bounded extension 
$\dot F^{-1,2}_2\to \dot F^{02}_2$ and it is not
a potential operator.
\end{prop}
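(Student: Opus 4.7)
The approach I would take is to exhibit explicit test functions $f_N\in\cS_0$ forcing the ratio $\|T_a f_N\|_{L^2}/\|f_N\|_{\dot F^{-1,2}_2}$ to blow up. Through the identifications $\dot F^{02}_2\approx L^2$ and $\dot F^{-1,2}_2\approx \dot W^{-1,2}$, this immediately rules out a bounded extension $\dot F^{-1,2}_2\to\dot F^{02}_2$, and then the case $\alpha=-1$ of Definition~\ref{potopp} fails, so $T_a$ is not a potential operator. Both conclusions will follow from a single quantitative construction.

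Concretely, I would fix a smooth radial bump $\rho\ge 0$ with $\rho(0)>0$ and $\mathrm{supp}\,\rho\subset B(0,\epsilon/4)$, set $h=\mathcal{F}^{-1}(\rho)\in\cS$, and for scalars $a_\nu$ to be chosen define $f_N\in\cS_0$ by
\[
\hat f_N(\xi)=\sum_{\nu=1}^N a_\nu\,\rho(\xi-2^\nu e_1).
\]
The translated bumps $B(2^\nu e_1,\epsilon/4)$ are pairwise disjoint, lie in the annulus where $\hat\phi_\nu$ is supported, and, by the scaling identity $\hat\phi_\mu(\xi)=\hat\phi(2^{-\mu}\xi)$ applied to the hypotheses $\hat\phi|B(e_1,\epsilon)\equiv 1$ and $\mathcal{F}(\phi_{\pm 1})|B(e_1,\epsilon)\equiv 0$, one verifies that on each $B(2^\nu e_1,\epsilon/4)$ (with $\nu\ge 1$) the factor $\hat\phi_\nu$ is identically $1$ while $\hat\phi_\mu$ vanishes for every $\mu\ne\nu$. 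This yields the clean identity
\[
\phi_\mu\star f_N(x)=a_\mu\, e^{i 2^\mu e_1\cdot x}h(x)\qquad(1\le\mu\le N),
\]
and zero otherwise; plugging this into the series defining $T_a$, every modulation cancels against its conjugate and the whole operator collapses to the single function
\[
T_a f_N(x)=(2\pi)^{-n/2}h(x)\sum_{\nu=1}^N 2^{-\nu}a_\nu.
\]

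With the collapse in hand the norm computation is short: Plancherel gives $\|T_a f_N\|_{L^2}=c_0\bigl|\sum_\nu 2^{-\nu}a_\nu\bigr|$, while $|\xi|\asymp 2^\nu$ on $\mathrm{supp}\,\rho(\cdot-2^\nu e_1)$ together with disjointness yields $\|f_N\|^2_{\dot F^{-1,2}_2}\approx\|\mathcal{I}^1 f_N\|^2_{L^2}\asymp\sum_\nu 2^{-2\nu}|a_\nu|^2$. Choosing $a_\nu=2^\nu$ produces $\|T_a f_N\|_{L^2}\asymp N$ against $\|f_N\|_{\dot F^{-1,2}_2}\asymp N^{1/2}$, so the ratio blows up like $N^{1/2}$.

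The main obstacle is the step-by-step verification of the cancellation identity, which requires careful dyadic bookkeeping: I would need to check that $\rho(\cdot-2^\nu e_1)$ sits inside $B(2^\nu e_1,2^\nu\epsilon)$ (where $\hat\phi_\nu\equiv 1$) and simultaneously inside $B(2^\nu e_1,2^{\nu-1}\epsilon)$ (where both $\hat\phi_{\nu\pm 1}$ vanish, after translating the assumptions on $\mathcal{F}(\phi_{\pm 1})$ through the dilation $\xi\mapsto 2^{-\mu}\xi$), while the support of $\hat\phi_\mu$ is disjoint from $\mathrm{supp}\,\rho(\cdot-2^\nu e_1)$ whenever $|\mu-\nu|\ge 2$. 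Once that algebraic collapse is established, everything else is standard.
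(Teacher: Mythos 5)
Your proposal is correct, and it is essentially the argument the paper delegates to its references (the Stein counterexample on pp.~272--273, adapted to order $-1$): test functions with Fourier support in small balls about $2^\nu e_1$ so that only the diagonal Littlewood--Paley piece survives, the modulations $e^{-i2^\nu e_1\cdot x}$ cancel exactly, and the output piles up on a single bump, producing the $N$ versus $N^{1/2}$ mismatch that rules out a bounded extension $\dot F^{-1,2}_2\to\dot F^{02}_2$ and hence, via the case $\alpha=-1$ of Definition \ref{potopp}, the potential-operator property. The dyadic bookkeeping you flag does go through with the paper's hypotheses on $\hat\phi$ and $\mathcal{F}(\phi_{\pm1})$ (up to an immaterial Fourier normalization constant in the identity for $\phi_\mu\star f_N$), so there is no gap.
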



\section{A special $T1$ theorem}

As indicated above, we need further 
conditions on 
the operator $T\in \mathrm{SK}^{-1}(\delta)$
to ensure that it is a potential operator. 
In this section we pose the cancellation conditions $T1=0=T^t1$;
these are automatically satisfied
by convolution type operators
whose standard kernel, of order $-1$, is of the form $K(x,y)=k(x-y)$ for
some $k:\R^n\to \C$.
Later these conditions are relaxed with the aid of this
special case and paraproducts,  which constitute a prime
example of potential operators.

We prove the special
$T1$ theorem (where $T1=0=T^t1$) using the theory of almost diagonal
potential operators. This involves an almost diagonality estimate
and this is the very essence of this section in the form of
a tedious but elementary computation performed in Lemma \ref{ar1}
and in Theorem \ref{isolause}.
This is a typical almost diagonality estimate; for instance it 
resembles 
the wavelet proof
for the classical $T1$ theorem, see \cite[pp. 51--57]{meyer-c}

\begin{lem}\label{ar1} Let $T\in \mathrm{SK}^{-1}(\delta)$ and
 $Q=Q_{\nu k}$. Then for all $x\in \R^n$, we have
\[
|T(\psi_Q)(x)|+|T^t(\phi_Q)(x)|\le C_{K,\phi,\psi} l(Q) |Q|^{-1/2} 
\bigg(1+\frac{|x-x_Q|}{l(Q)}\bigg)^{-(n+\delta)}.
\]
\end{lem}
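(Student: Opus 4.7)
The core idea is to exploit the even symmetry of the Littlewood--Paley functions against the second-order regularity of $K$. Since $\hat{\psi}$ is real and radial, $\psi$ is even, so $\psi_Q(x_Q+z)=\psi_Q(x_Q-z)$; and $\psi\in\mathcal{S}_0$ gives $\int\psi_Q=0$. Symmetrizing in $z\mapsto -z$ and subtracting the zero-mean term produces the key identity
\[
T\psi_Q(x)=\tfrac12\int_{\R^n}\bigl[K(x,x_Q+z)+K(x,x_Q-z)-2K(x,x_Q)\bigr]\psi_Q(x_Q+z)\,\mathrm{d}z,
\]
which is what makes the second-order condition \eqref{kk} directly applicable to $T\psi_Q$.

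I then split on the size of $|x-x_Q|$ relative to $l(Q)$. In the near regime $|x-x_Q|\le 4\,l(Q)$ the target reduces to $|T\psi_Q(x)|\le C\,l(Q)|Q|^{-1/2}$, and I argue directly from \eqref{ee} together with the pointwise bound $|\psi_Q(y)|\le C_N|Q|^{-1/2}(1+|y-x_Q|/l(Q))^{-N}$: splitting the $y$-integral at $|y-x|=l(Q)$, the local integrability of $|x-y|^{1-n}$ provides the required factor $l(Q)$ on the inner ball, while the Schwartz decay of $\psi$ absorbs the exterior.

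In the far regime $|x-x_Q|>4\,l(Q)$, I use the symmetrized identity and split the $z$-integral at $|z|=|x-x_Q|/2$. On the inner part, \eqref{kk} bounds the bracket by $C_K|z|^{1+\delta}|x-x_Q|^{-n-\delta}$, and after the scaling $w=2^\nu z$ together with $\int|w|^{1+\delta}|\psi(w)|\,\mathrm{d}w<\infty$ this contributes exactly $C\,l(Q)|Q|^{-1/2}(l(Q)/|x-x_Q|)^{n+\delta}$, which matches the target. On the outer part, I bound each of the three terms of the bracket using only \eqref{ee}; the argument of $\psi_Q$ then lies at distance $\gtrsim|x-x_Q|$ from $x_Q$, so the Schwartz decay of $\psi_Q$ yields a factor $(l(Q)/|x-x_Q|)^N$ for any $N$, and choosing $N\ge n+1+\delta$ makes this contribution negligible against the target.

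The estimate for $T^t\phi_Q(x)$ follows from the same argument applied to the transposed kernel $K^t(x,y)=K(y,x)$, which satisfies \eqref{ee}--\eqref{kk} with \eqref{tt} now playing the role that \eqref{kk} played for $T\psi_Q$; the Littlewood--Paley function $\phi$ is likewise even with $\int\phi_Q=0$, so the symmetrization identity applies verbatim. The main obstacle is conceptual rather than computational: only a second-order, symmetric regularity condition is imposed on $K$, so a first-order difference estimate of the kind used in the classical $T1$ theorem is unavailable, and the evenness of the Littlewood--Paley functions is precisely what is needed to extract second-order cancellation through the symmetrization step.
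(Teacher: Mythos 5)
Your proof is correct and takes essentially the same route as the paper: you symmetrize using that $\psi$ is even with vanishing integral so that the second-difference condition \eqref{kk} becomes applicable, prove a uniform bound $C\,l(Q)|Q|^{-1/2}$ together with a decay bound, and obtain the latter by splitting the $z$-integral at $|z|\approx|x-x_Q|/2$, using \eqref{kk} on the inner part and \eqref{ee} plus the rapid decay of $\psi_Q$ on the outer part (which is exactly the paper's decomposition into the sets $A$, $B$, $C$, $D$, the $B$, $C$, $D$ pieces being the careful execution of your "negligible" outer estimate near the kernel singularity and at infinity). The only cosmetic difference is that the paper derives the uniform bound from the maximal inequality \eqref{maximal} rather than from a direct split of the integral at $|y-x|=l(Q)$.
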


\begin{proof}
Let us prove the following inequalities
\begin{eqnarray}
&&|T(\psi_Q)(x)|\le C_{K,\psi} l(Q)|Q|^{-1/2} \label{vaite2},\\
&&|T(\psi_Q)(x)|\le C_{K,\psi} l(Q)|Q|^{-1/2}\big(l(Q)^{-1}|x-x_Q|
\big)^{-(n+\delta)} \label{vaite1}.
\end{eqnarray}
The claim concerning $T(\psi_Q)$ follows from the inequalities above. 
Similar proof shows that the estimate holds also for 
$T^t(\phi_Q)$.
Let us first prove the inequality \eqref{vaite2};
using the
 maximal inequality \eqref{maximal} with $p=1$ it is obtained
 as follows
\begin{eqnarray*}
|T(\psi_Q)(x)| &\le& 
 C_K M(\psi_Q)(x)^{(n-1)/n}||\psi_Q||_1^{1/n}\\ 
&\le& C_K |Q|^{-(n-1)/2n + 1/2n}||\psi||_\infty^{(n-1)/n} 
||\psi||_1^{1/n}
=C_{K,\psi} l(Q) |Q|^{-1/2}.
\end{eqnarray*}
Then we prove the inequality \eqref{vaite1}.
We can assume that $x\not=x_Q$.
Since $\psi$ is radial and its integral vanishes, we obtain
\begin{eqnarray*}
&&|T(\psi_Q)(x)| = |Q|^{-1/2}\bigg|\int K(x,y)\psi(2^{\nu}y-
2^\nu x_Q){\rm{d}}y\bigg|\\
&&=|Q|^{-1/2} \bigg|\int K(x,x_Q +\omega)\psi(2^\nu \omega)
{\rm d}\omega\bigg|\\
&&\le |Q|^{-1/2}\bigg|\int \big(K(x,x_Q +\omega) 
+ K(x,x_Q-\omega) -2K(x,x_Q)\big)
\psi(2^\nu \omega){\rm d}\omega\bigg|\\
&&\le |Q|^{-1/2}\int \big|K(x,x_Q +\omega) + K(x,x_Q-\omega) 
-2K(x,x_Q)\big|
|\psi(2^\nu \omega)|{\rm d}\omega.
\end{eqnarray*}
Denote $\sigma=x-x_Q$ and
$A=\{\omega\,:\,|\omega|\le |\sigma|/2\}$, 
$B=\{\omega\,:\,|\sigma-\omega | < |\sigma|/2\}$, 
$C=\{\omega\,:\,|\sigma+\omega | < |\sigma|/2\}$,
$D=\R^n\setminus(A\cup B\cup C)$. 
It suffices to show the desired upper bound
while integrating with respect to each of the sets
above.\\
{\em Set $A$.} Using the property \eqref{kk}, we obtain
\begin{eqnarray*}
&&\int_A \big|K(x,x_Q +\omega) + K(x,x_Q-\omega) -2K(x,x_Q)\big|
|\psi(2^\nu \omega)|{\rm d}\omega\\
&&\le C_K |\sigma|^{-(n+\delta)} \int_{\R^n} 
|\omega|^{1+\delta} |\psi(2^\nu \omega)|{\rm d}\omega\\
&&\le C_K |\sigma|^{-(n+\delta)}2^{-\nu(n+1+\delta)} \int_{\R^n}
|\rho|^{1+\delta}|\psi(\rho)|{\rm d}\rho \\
&&\le C_{K,\psi} l(Q)\big(l(Q)^{-1} |x-x_Q|)^{-(n+\delta)}.
\end{eqnarray*}
{\em Set $B$.} 
\begin{eqnarray*}
 &&\int_B \big|K(x,x_Q +\omega) + K(x,x_Q-\omega) -2K(x,x_Q)
\big|
|\psi(2^\nu \omega)|{\rm d}\omega\\
&&\le \int_B \big(|K(x,x_Q +\omega)| + |K(x,x_Q-\omega)| + 
|2K(x,x_Q)|\big)
|\psi(2^\nu\omega)|{\rm d}\omega.
\end{eqnarray*}
Two latter summands are dealt with like the set $D$.
Therefore it suffices to estimate the first summand
using the property \eqref{ee}
as follows
\begin{eqnarray*}
&&\int_B |K(x,x_Q +\omega)| |\psi(2^\nu\omega)|{\rm d}\omega\\
&&\le C_{K,\psi} \int_B |\sigma-\omega|^{1-n} 
|2^\nu \omega|^{-(n+1+\delta)}{\rm d}\omega\\
&&\le C_{K,\psi} 2^{-\nu(n+1+\delta)}|\sigma|^{-(n+1+\delta)}
\int_B |\sigma-\omega|^{1-n}{\rm d}\omega\\
&&= C_{K,\psi} 2^{-\nu(n+1+\delta)}|\sigma|^{-(n+\delta)}
=C_{K,\psi} l(Q)\big(l(Q)^{-1} |x-x_Q|)^{-(n+\delta)}.
\end{eqnarray*}
{\em Set  $C$}. This
is estimated as the integral over the set $B$.\\
{\em Set $D$}. 
Using the
property \eqref{ee}, we obtain
\begin{eqnarray*}
&&\int_D \big|K(x,x_Q +\omega) + K(x,x_Q-\omega) -2K(x,x_Q)\big|
|\psi(2^\nu \omega)|{\rm d}\omega\\
&&\le C_K \int_D |\sigma|^{1-n} |\psi(2^\nu\omega)|
{\rm d}\omega\\
&&= C_K |\sigma|^{1-n}\int_D |\omega|^{-(1+\delta)}
|\omega|^{1+\delta}|\psi(2^\nu \omega)|{\rm d}\omega\\
&&\le C_K |\sigma|^{-(n+\delta)}\int_{\R^n} |\omega
|^{1+\delta}|\psi(2^\nu\omega)|{\rm d}\omega\\
&&\le C_{K,\psi} l(Q)\big(l(Q)^{-1} |x-x_Q|)^{-(n+\delta)}.
\end{eqnarray*}
The last inequality is dealt with like the set $A$.
The estimate \eqref{vaite1} follows.
\end{proof}

\begin{rmk}
Let us now make the following remark regarding the Definition 
\ref{t1}
of $T1$.
Assume that $T\in\mathrm{SK}^{-1}(\delta)$. Then
transposing 
and using Lemma \ref{ar1}, we get the identities
$\lim_{j\to\infty} \langle T(\eta^j),\phi_Q\rangle 
= \int T^t(\phi_Q)$
and similarly for $\psi_Q$'s. 
However, in many cases
it is easier to work with this regularization given
by the family  $\{\eta^j\,:\,j\in \N\}$
 and therefore
we have embedded it already in the definition of $T1$.
\end{rmk}

The following lemma is a special case of 
\cite[Lemma B.1]{F-J1}.
 Note the small
misprint in the original formulation where the inequality 
``$j\ge k$'' should
be replaced by the inequality
``$j\le k$''.

\begin{lem}\label{ar2}
Let $\nu,\mu\in \Z$, $\nu \le \mu$, $x_1\in \R^n$, $\delta>0$, 
and $g,h\in L^1$ be such 
that 
\begin{eqnarray}
&&|g(x)|\le C_g 2^{\nu n/2}(1+2^\nu|x|)^{-(n+\delta)}, \label{e1}\\
&&|g(x)-g(y)|\le C_g 2^{\nu(n/2+\delta/2)}|x-y|^{\delta/2}
\sup_{|z|\le |x-y|}(1+2^\nu|z-x|)^{-(n+\delta)}, \label{e2}\\
&&|h(x)|\le C_h 2^{\mu n/2}(1+2^{\mu} |x-x_1|)^{-(n+\delta)},
\label{e3}\\
&&\int_{\R^n} h(x){\rm{d}}x = 0.\label{e4}
\end{eqnarray}
Then
$|g\star h(x)|\le C_{g,h,\delta}   2^{-(\mu-\nu)(n/2+\delta/2)} 
(1+ 2^\nu |x-x_1|)^{-(n+\delta)}$.
\end{lem}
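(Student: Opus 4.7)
The plan is to exploit the cancellation \eqref{e4} by subtracting a well-chosen constant from $g$, and then split the remaining integral into a near and a far piece relative to the coarser scale. Setting $a:=x-x_1$ and substituting $w=y-x_1$, I would first rewrite
\[
g\star h(x)=\int_{\R^n}\bigl[g(a-w)-g(a)\bigr]h(x_1+w)\,\mathrm{d}w,
\]
then fix a threshold $r:=\tfrac14\bigl(2^{-\nu}\vee|a|\bigr)$ and split this into $I_{\mathrm{near}}+I_{\mathrm{far}}$ according to whether $|w|\le r$ or $|w|>r$.

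On the near piece I would apply the H\"older regularity \eqref{e2} to $g(a-w)-g(a)$. The key observation is that for $|w|\le r$ and $|z|\le|w|$ one has $|z-(a-w)|\ge c(2^{-\nu}\vee|a|)$, so the supremum appearing in \eqref{e2} is bounded by $C(1+2^\nu|a|)^{-(n+\delta)}$ and can be pulled outside the integral. The remaining factor $\int |w|^{\delta/2}|h(x_1+w)|\,\mathrm{d}w$ is, after rescaling $v=2^\mu w$ and using \eqref{e3}, at most $C C_h\,2^{-\mu(n/2+\delta/2)}$; together with the prefactor $C_g\,2^{\nu(n/2+\delta/2)}$ coming from \eqref{e2} this produces exactly the claimed bound $C\,2^{-(\mu-\nu)(n/2+\delta/2)}(1+2^\nu|a|)^{-(n+\delta)}$.

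On the far piece I would drop cancellation and use $|g(a-w)-g(a)|\le|g(a-w)|+|g(a)|$ together with the size bound \eqref{e1}. The constraint $|w|>r$ forces $2^\mu|w|\ge c\,2^{\mu-\nu}(1\vee 2^\nu|a|)$, which lets me peel off a factor $(1+2^\mu|w|)^{-\delta/2}$ from \eqref{e3} and estimate it by $C\,2^{-(\mu-\nu)\delta/2}(1+2^\nu|a|)^{-\delta/2}$; this is what supplies the needed smallness in $\mu-\nu$. What remains is a convolution-type integral of two power-decay factors at scales $2^{-\nu}$ and $2^{-\mu}$, which by the standard fine-vs-coarse approximate-identity inequality (valid since $\nu\le\mu$) is at most $C\,2^{-\mu n}(1+2^\nu|a|)^{-(n+\delta/2)}$. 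Bookkeeping the prefactors $2^{\nu n/2}\cdot 2^{\mu n/2}\cdot 2^{-\mu n}=2^{-(\mu-\nu)n/2}$ then gives the desired bound in this region as well.

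The main obstacle should be the near-region computation of the supremum in \eqref{e2}: one must verify that the choice of $r$ is small enough for $a-w$ to remain at distance $\gtrsim 2^{-\nu}\vee|a|$ from any $z$ with $|z|\le|w|$, yet large enough that the far piece still enjoys the $(1+2^\mu|w|)^{-\delta/2}$ smallness that forces the correct $(\mu-\nu)\delta/2$ gain. Everything else reduces to elementary rescaling and the standard convolution inequality between two power-decay bumps at comparable but different scales.
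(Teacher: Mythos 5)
Your argument is correct, and it is worth noting that the paper itself gives no proof of this lemma at all: it simply cites \cite[Lemma B.1]{F-J1} (with a remark about a misprint there). What you have written is essentially a self-contained reconstruction of the standard Frazier--Jawerth appendix argument behind that citation: use \eqref{e4} to insert $g(a-w)-g(a)$, split at the coarse scale $r\approx 2^{-\nu}\vee|a|$, use the H\"older bound \eqref{e2} on the near piece and the size bounds \eqref{e1}, \eqref{e3} plus the fine-vs-coarse convolution inequality on the far piece. The exponent bookkeeping checks out: on the near piece $2^{\nu(n/2+\delta/2)}\cdot 2^{-\mu(n/2+\delta/2)}$ gives the full gain with decay $(1+2^\nu|x-x_1|)^{-(n+\delta)}$, and on the far piece the peeled factor $(1+2^\mu|w|)^{-\delta/2}\le C2^{-(\mu-\nu)\delta/2}(1+2^\nu|x-x_1|)^{-\delta/2}$ combines with $2^{\nu n/2}2^{\mu n/2}2^{-\mu n}=2^{-(\mu-\nu)n/2}$ and the $(n+\delta/2)$-decay from the convolution estimate to give exactly the claim. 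One small inaccuracy to repair: your assertion that $|z-(a-w)|\ge c\,(2^{-\nu}\vee|a|)$ for $|w|\le r$, $|z|\le|w|$ is false when $|a|\lesssim 2^{-\nu}$ (the distance can vanish); but in that regime $1+2^\nu|a|\le 2$, so the supremum in \eqref{e2} is trivially bounded by $C(1+2^\nu|a|)^{-(n+\delta)}$, and the genuine geometric lower bound is only needed, and holds, when $|a|\ge 2^{-\nu}$. With that case distinction made explicit, the proof is complete.
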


The following special $T1$ theorem
is one of our main results.
We make a remark concerning the vanishing integrals assumption
after the proof.

\begin{thm}\label{isolause}
Let $T\in \mathrm{SK}^{-1}(\delta)$ be such that
for every dyadic cube
$Q\in \mathcal{D}$ we have
\[
\int_{\R^n} T(\psi_Q)=\int_{\R^n} T^t(\phi_Q)=0.
\]
Then $T\in \mathrm{ADP}(\delta)$; in particular, 
the integral operators $T$ and $T^t$ have bounded extensions
$\dot F^{\alpha q}_p\to \dot F^{1+\alpha,q}_p$
for all $1\le p,q< \infty$ and $-1\le \alpha\le 0$.
\end{thm}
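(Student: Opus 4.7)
The second conclusion (boundedness $\dot F^{\alpha q}_p \to \dot F^{1+\alpha,q}_p$) is an immediate consequence of the first via Corollary \ref{jatkuus}, so the whole task reduces to showing $T \in \mathrm{ADP}(\delta)$, i.e., the almost diagonality estimate
\[
|\langle T(\psi_P), \phi_Q\rangle| \le C\,\omega_{P,Q}(\delta),\qquad P,Q \in \mathcal{D}.
\]
Since $\omega_{P,Q}(\delta)=\omega_{Q,P}(\delta)$ and $\langle T(\psi_P),\phi_Q\rangle = \langle \psi_P, T^t(\phi_Q)\rangle$, with $T^t \in \mathrm{SK}^{-1}(\delta)$ satisfying exactly the same hypotheses as $T$, it suffices to treat the case $l(P) \le l(Q)$; write $l(P)=2^{-\mu}$, $l(Q)=2^{-\nu}$ with $\mu\ge\nu$. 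The opposite case is obtained by swapping the roles of $(T,\psi_P)$ and $(T^t,\phi_Q)$ and invoking $\int T^t(\phi_Q)=0$ instead of $\int T(\psi_P)=0$.

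The plan is to rewrite the pairing as a convolution evaluated at the origin and then invoke Lemma \ref{ar2}. Translating by $-x_Q$, set $G(u)=\phi_Q(u+x_Q)=|Q|^{1/2}\phi_\nu(u)$ (which is even since $\phi$ is radial) and $H(u)=T(\psi_P)(u+x_Q)$; then
\[
\langle T(\psi_P),\phi_Q\rangle = \int G(u)H(u)\,du = (G\star H)(0).
\]
I would then verify the four hypotheses of Lemma \ref{ar2} with $g=G$ at scale $\nu$ centered at the origin and $h=H$ at scale $\mu$ centered at $x_1 := x_P-x_Q$. Since $\phi\in\mathcal{S}$, the bounds \eqref{e1} and \eqref{e2} on $G$ hold with a constant $C_g$ depending only on $\phi$ (the Hölder bound \eqref{e2} being a standard interpolation between the pointwise size and the gradient decay of $\phi$). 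Lemma \ref{ar1} applied to $T(\psi_P)$ translates, after the change of variables, into
\[
|H(u)| \le C_{K,\phi,\psi}\,l(P)\,2^{\mu n/2}\bigl(1+2^\mu|u-x_1|\bigr)^{-(n+\delta)},
\]
which is exactly \eqref{e3} with $C_h = C\,l(P)$. The indispensable ingredient is the hypothesis $\int T(\psi_P)=0$, which, translated, is precisely the cancellation \eqref{e4}, $\int H=0$.

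Lemma \ref{ar2} then delivers
\[
|\langle T(\psi_P),\phi_Q\rangle| = |(G\star H)(0)| \le C\,l(P)\,2^{-(\mu-\nu)(n/2+\delta/2)}\bigl(1+|x_P-x_Q|/l(Q)\bigr)^{-(n+\delta)},
\]
and the right-hand side is precisely $C\,\omega_{P,Q}(\delta)$ under $l(P)\le l(Q)$. This proves the almost diagonality, hence $T\in\mathrm{ADP}(\delta)$. The main obstacle is strictly the bookkeeping of constants through Lemmas \ref{ar1} and \ref{ar2}: one must observe that the bound from Lemma \ref{ar1} comes with a distinguished factor $l(P)$ (inherited from the size property \eqref{ee} of standard kernels of order $-1$), and that this factor is exactly what produces the extra $(l(P)\wedge l(Q))$ appearing in $\omega_{P,Q}(\delta)$, encoding the one derivative of smoothing that one expects from a potential operator. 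The vanishing-integral hypothesis is not a technicality but a genuine necessity: without it, \eqref{e4} fails and Lemma \ref{ar2} does not close; this gap is precisely what the paraproduct construction of Section 6 will fill in order to handle the general $T1,T^t1\in \mathcal{I}^1(\mathrm{BMO})$ case.
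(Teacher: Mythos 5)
Your proposal is correct and follows essentially the same route as the paper: reduce to the almost diagonality estimate, treat the case $l(P)\le l(Q)$ by combining the pointwise decay of $T(\psi_P)$ from Lemma \ref{ar1} with the cancellation $\int T(\psi_P)=0$ through the convolution estimate of Lemma \ref{ar2}, and obtain the opposite case by transposing and using $\int T^t(\phi_Q)=0$. The only cosmetic difference is that you translate so the convolution is evaluated at the origin (using evenness of $\phi$), whereas the paper builds the reflection into $g(x)=\phi_Q(x_Q-x)$ and evaluates $g\star h$ at $x_Q$.
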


\begin{proof}
The 
boundedness results follow from Corollary \ref{jatkuus} after 
we have
verified the claim concerning the  almost diagonality.
For this purpose let $P=P_{\mu j}$ and $Q=Q_{\nu k}$.
We do a case study with respect to the relative order of
$l(P)$ and $l(Q)$.

{\em Case $l(P)=2^{-\mu} \le 2^{-\nu}=l(Q)$.} 
It suffices to show that
\[
|\langle T(\psi_P),\phi_Q\rangle|\le C_{T,\phi,\psi} 
\frac{l(P)^{1+(n+\delta)/2}}{l(Q)^{(n+\delta)/2}}
\bigg(1+\frac{|x_P-x_Q|}{l(Q)}\bigg)^{-(n+\delta)}.
\] 
Applying Lemma \ref{ar1},
we obtain
\[
|T(\psi_P)(x)|\le C_{K,\psi} l(P) 2^{\mu n/2} 
(1+2^\mu|x-x_P|)^{-(n+\delta)}.
\]
Denote $h(x)=l(P)^{-1}T(\psi_P)(x)$ and $g(x)=\phi_Q(x_Q-x)$.
Then
\[
|g(x)| = |Q|^{1/2}|\phi_\nu(-x)|=2^{-\nu n/2}2^{\nu n}|
\phi(-2^\nu x)|
\le C_\phi 2^{\nu n/2}(1+2^\nu |x|)^{-(n+\delta)},
\]
and also, since $0<\delta\le 1$, we have
\begin{eqnarray*}
|g(x)-g(y)|&=&2^{\nu n/2}|\phi(-2^\nu x)-\phi(-2^\nu y)|\\
&\le& C_\phi 2^{\nu n/2}|2^{\nu}(x-y)|^{\delta/2}\sup_{|z|\le 
2^{\nu}|x-y|} (1+|z-2^\nu x|)^{-(n+\delta)}\\
&=& C_\phi 2^{\nu (n/2+\delta/2)}|x-y|^{\delta/2}\sup_{|\omega|
\le |x-y|} (1+2^\nu|\omega-x|)^{-(n+\delta)}.
\end{eqnarray*}
Using these estimates and Lemma \ref{ar2}, we obtain
\begin{eqnarray*}
|\langle T(\psi_P),\phi_Q\rangle| &=& \bigg|\int T(\psi_P)(x)
\phi_Q(x){\rm{d}}x\bigg|\\
&=&l(P)\bigg|\int g(x_Q-x)h(x){\rm{d}}x\bigg|\\
&=& l(P)|g\star h(x_Q)|\\
&\le& C_{K,\phi,\psi,\delta} l(P)2^{-(\mu-\nu)(n/2+\delta/2)}
(1+2^\nu|x_Q-x_P|)^{-(n+\delta)}\\
&=&C_{K,\phi,\psi,\delta} 
\frac{l(P)^{1+(n+\delta)/2}}{l(Q)^{(n+\delta)/2}}
\bigg(1+\frac{|x_P-x_Q|}{l(Q)}\bigg)^{-(n+\delta)}.
\end{eqnarray*}
This is the desired estimate in this case.
{\em Case $l(P)=2^{-\mu}> 2^{-\nu}=l(Q)$.} 
It suffices to show that
\[
|\langle \psi_P,T^t(\phi_Q)\rangle|=|\langle T(\psi_P), 
\phi_Q\rangle| \le C_{T,\phi,\psi} \frac{l(Q)^{1+(n+\delta)/2}}
{l(P)^{(n+\delta)/2}}
\bigg(1+\frac{|x_P-x_Q|}{l(P)}\bigg)^{-(n+\delta)}.
\]
Computations here are similar to the first case and
we omit the details.
\end{proof}

\begin{rmk}
If  $T$ satisfies
the assumptions of Theorem \ref{isolause}, then 
$T1=0=T^t1$. This is true
because Theorem \ref{isolause} implies
that $Tf,T^tf\in  \dot F^{02}_1\approx H^1$ for every
$f\in\mathcal{S}_0 \subset \dot F^{-12}_1$
and the integrals of $H^1$-functions vanish.
The converse is also true: if
$T\in\mathrm{SK}^{-1}(\delta)$ is such that
$T1=0=T^t1$, then  $T$ satisfies all the 
assumptions of Theorem \ref{isolause}. It is also
worth noting that the
property $T1=0=T^t1$ for $T\in\mathrm{SK}^{-1}(\delta)$ is independent
of the dual pair of Littlewood--Paley functions; this also
follows from the  boundedness properties
as above. 
\end{rmk}
 
We conclude this section with another formulation
of Theorem \ref{isolause}. Its proof uses 
a duality result from
\cite[pp. 76--79]{F-J1} stating that
$(\dot f^{\alpha q}_p)' \approx \dot f^{-\alpha q'}_{p'}$ for
$1\le p,q<\infty$ and $\alpha\in \R$.

\begin{cor}
Let $T\in\mathrm{SK}^{-1}(\delta)$ be such that
$T1=0=T^t1$. Then 
$T$ has an extension as a bounded
operator 
$\dot F^{\alpha q}_p\to \dot F^{1+\alpha,q}_p$
for all $-1\le \alpha\le 0$ and
$1\le p,q\le\infty$ excluding the cases
$(p,q)=(1,\infty)$ and $(p,q)=(\infty,1)$.
\end{cor}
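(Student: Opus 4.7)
The plan is to bootstrap from Theorem~\ref{isolause} via duality, exploiting the fact that the hypothesis $T1=0=T^t1$ is symmetric in $T$ and $T^t$. Theorem~\ref{isolause} already supplies the desired bounded extension in the sub-range $1\le p,q<\infty$, so only the pairs $(p,q)$ with $p=\infty$ or $q=\infty$ are at stake. Applying Theorem~\ref{isolause} also to $T^t\in\mathrm{SK}^{-1}(\delta)$ (which inherits the cancellation hypothesis) shows that $T^t$ extends as a bounded operator $\dot F^{\beta s}_r\to\dot F^{1+\beta,s}_r$ for every $-1\le\beta\le 0$ and $1\le r,s<\infty$.

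For each remaining $(p,q)$ I would invoke the duality result $(\dot f^{\beta s}_r)'\approx \dot f^{-\beta,s'}_{r'}$ from \cite[pp.~76--79]{F-J1}, applied with the parameter choices $(\beta,r,s)=(-\alpha,p',q')$ and $(\beta,r,s)=(-1-\alpha,p',q')$, to identify $\dot F^{\alpha q}_p$ with the dual of $\dot F^{-\alpha,q'}_{p'}$ and $\dot F^{1+\alpha,q}_p$ with the dual of $\dot F^{-1-\alpha,q'}_{p'}$. The admissibility requirement $1\le p',q'<\infty$ is met precisely in the three new cases $(\infty,q)$ with $1<q<\infty$, $(p,\infty)$ with $1<p<\infty$, and $(\infty,\infty)$; the excluded pairs $(1,\infty)$ and $(\infty,1)$ fail because one of the predual indices lands at $\infty$, outside the cited duality range.

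In any of the three permitted cases, Theorem~\ref{isolause} applied to $T^t$ with parameters $(\beta,r,s)=(-1-\alpha,p',q')$ produces a bounded operator $T^t:\dot F^{-1-\alpha,q'}_{p'}\to\dot F^{-\alpha,q'}_{p'}$, since $-1\le -1-\alpha\le 0$ whenever $-1\le\alpha\le 0$. Its Banach adjoint $(T^t)^*$, post- and pre-composed with the two duality identifications above, yields a bounded operator $\dot F^{\alpha q}_p\to\dot F^{1+\alpha,q}_p$. To check that this adjoint really extends the original $T:\mathcal{S}_0\to\mathcal{S}'/\mathcal{P}$, observe that for $f,g\in\mathcal{S}_0$,
\[
\langle (T^t)^*f,g\rangle=\langle f,T^tg\rangle=\langle Tf,g\rangle,
\]
so the two operators coincide on $\mathcal{S}_0$, which embeds into $\dot F^{\alpha q}_p$. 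The entire argument is essentially bookkeeping on parameter ranges; I do not expect a genuine obstacle once Theorem~\ref{isolause} is in hand, since the substantive almost-diagonality work is already encapsulated there.
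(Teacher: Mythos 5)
Your argument is correct and takes essentially the same route as the paper: the endpoint cases $p=\infty$ or $q=\infty$ are deduced by dualizing the finite-index boundedness supplied by Theorem \ref{isolause} (applied to $T^t$, which inherits the hypotheses), using the Frazier--Jawerth duality $(\dot f^{\alpha q}_p)'\approx\dot f^{-\alpha q'}_{p'}$ for $1\le p,q<\infty$. The only cosmetic difference is that you take Banach adjoints directly at the level of the $\dot F$-spaces, while the paper cites the duality at the sequence-space level (to be transferred via the $\phi$-transform); both versions rest on the same duality result and on the symmetry of the hypothesis $T1=0=T^t1$ in $T$ and $T^t$.
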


\section{A converse to the special $T1$ theorem}\label{pkt}

Here we prove a converse result to the special $T1$ theorem
by
adapting computations performed in
\cite{hani} to our context of potential operators.
To be more specific, we show that if $T\in\mathrm{ADP}(\epsilon)$,
then
$T\in\mathrm{SK}^{-1}(\delta)$ for suitable $\delta$ and 
$T1=0=T^t1$ (see Definition \ref{t1}).
Combining the results above we find that $T\in\mathrm{ADP}(\epsilon)$
if and only if $T\in\mathrm{SK}^{-1}(\delta)$ and $T1=0=T^t1$.
Thus the role of almost diagonal potential operators
is twofold: first they are used to establish the special
$T1$ theorem and second they are naturally occuring
examples of operators associated with standard kernels of order $-1$.

From the $\mathrm{ADP}$
point of view the characterization above states that  
$\cup_{\epsilon>0} \mathrm{ADP}(\epsilon)$ is independent of the
dual pair of Littlewood--Paley functions. Here it becomes crucial
that the property $T1=0=T^t1$ for $T\in\mathrm{SK}^{-1}(\delta)$ is
independent of the dual pair of Littlewood--Paley functions.

First we recall some notation.
Let $\beta,\gamma>0$,
$P\in\mathcal{D}_\mu$, and $Q\in\mathcal{D}_\nu$. Then we denote
\[
W_{P,Q}(\beta,\gamma)
=2^{-|\nu-\mu|(n+\gamma)/2}\bigg(1+\frac{|x_Q-x_P|}{l(Q)
\vee l(P)}\bigg)^{-(n+\beta)}.
\]
Note that $\omega_{P,Q}(\epsilon)=\big(l(P)\wedge l(Q)\big)W_{P,Q}
(\epsilon,\epsilon)$; recall Definition \ref{adpmaar}.

The following lemma, yielding a control to the matrix
product of two almost diagonal matrices, is a crucial tool here. 
Its proof can be found in \cite[p. 158]{F-J1}.

\begin{lem}\label{apu1}
Let $P,Q\in\mathcal{D}$.
Let $\beta,\gamma_1,\gamma_2>0$, $\gamma_1\not=\gamma_2$,
and $\gamma_1+\gamma_2>2\beta$. Then
\[
\sum_{R\in \mathcal{D}} W_{P,R}(\beta,\gamma_1)
W_{R,Q}(\beta,\gamma_2)\le C_{n,\beta,\gamma_1,\gamma_2}
W_{P,Q}(\beta,\gamma_1\wedge \gamma_2).
\]
\end{lem}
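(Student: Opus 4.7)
The plan is to write $\sum_R = \sum_\sigma \sum_{R \in \mathcal{D}_\sigma}$, estimate the inner sum at each scale via a discrete almost-orthogonality argument, and then recognize the outer sum as a geometric series whose convergence is guaranteed precisely by the two quantitative hypotheses on $\gamma_1, \gamma_2, \beta$.

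Writing $P \in \mathcal{D}_\mu$ and $Q \in \mathcal{D}_\nu$, I would assume without loss of generality $\mu \le \nu$ and split the scale sum into the three regimes (a) $\sigma \le \mu$, (b) $\mu \le \sigma \le \nu$, (c) $\sigma \ge \nu$. In each regime the quantities $l(R) \vee l(P)$ and $l(R) \vee l(Q)$ collapse to explicit dyadic scales, so the spatial factor of $W_{P,R} W_{R,Q}$ has explicit denominators. For the inner sum I would apply the standard pointwise dichotomy ``either $|x_R - x_P| \ge |x_P - x_Q|/2$ or $|x_R - x_Q| \ge |x_P - x_Q|/2$'', combined with the Riemann-sum comparison
\begin{equation*}
\sum_{R \in \mathcal{D}_\sigma}\bigl(1 + |x_R - x_0|/r\bigr)^{-(n+\beta)} \le C_{n,\beta} \max\bigl(1, r/l(R)\bigr)^n \quad (\beta > 0),
\end{equation*}
to pull out the desired spatial decay $\bigl(1 + |x_P - x_Q|/(l(P) \vee l(Q))\bigr)^{-(n+\beta)}$ and reduce to a scalar $\sigma$-dependent prefactor of the form $2^{-|\sigma - \mu|(n+\gamma_1)/2}\, 2^{-|\sigma - \nu|(n+\gamma_2)/2}\, 2^{E(\sigma)n}$, where $E(\sigma)$ is a cube-counting exponent that vanishes for $\sigma \in [\mu,\nu]$.

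Summing over $\sigma$ then reduces to three geometric sums. The hypothesis $\gamma_1 + \gamma_2 > 2\beta$ is used in regimes (a) and (c) to guarantee that the cube-counting growth $2^{E(\sigma) n}$ is absorbed by the exponential decay coming from the $W$-prefactors, yielding convergent geometric series whose sums are dominated by their endpoint values $2^{-(\nu - \mu)(n+\gamma_1)/2}$ and $2^{-(\nu - \mu)(n+\gamma_2)/2}$ respectively. Regime (b) is the delicate one: for $\sigma \in [\mu, \nu]$ the combined exponent is linear in $\sigma$ with slope $(\gamma_2 - \gamma_1)/2$, which is nonzero precisely because $\gamma_1 \ne \gamma_2$, so the truncated geometric series is controlled by a constant independent of $\nu - \mu$ times its larger endpoint, namely $2^{-(\nu - \mu)(n + \gamma_1 \wedge \gamma_2)/2}$. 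Were $\gamma_1 = \gamma_2$, one would instead pick up a spurious factor of $(\nu - \mu + 1)$; this logarithmic loss is exactly what the assumption $\gamma_1 \ne \gamma_2$ rules out. Combining the three contributions and recognizing the result as $W_{P,Q}(\beta, \gamma_1 \wedge \gamma_2)$ closes the argument.

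The main obstacle is the bookkeeping of the inner sum in regime (b), where the two spatial factors use genuinely different denominators $l(P)$ and $l(R)$ and a naive dichotomy produces the wrong denominator in the residual spatial factor. One has to interleave the dichotomy with the trivial bound $f_1 f_2 \le f_2$ (using $f_1 \le 1$ when $|x_P-x_Q|$ is comparable to or smaller than $l(P)$) to guarantee that the surviving spatial factor has denominator $l(P) = l(P) \vee l(Q)$, matching the target. This is where the unequal sizes of the two Poisson-type factors genuinely matter, and where the bulk of the technical calculation lives; once the inner estimate is in place, the outer summation in $\sigma$ is immediate from the two hypotheses.
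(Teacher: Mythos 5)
The paper itself gives no proof of this lemma (it simply cites Frazier--Jawerth, p.\ 158), and your scale-by-scale decomposition with a discrete convolution estimate at each fixed scale $\sigma$ is exactly the standard route. Your treatment of the intermediate regime is right: for $\mu\le\sigma\le\nu$ the inner sum carries no counting loss, the $\sigma$-sum is geometric with ratio exponent $(\gamma_2-\gamma_1)/2$, and $\gamma_1\ne\gamma_2$ is precisely what prevents the factor $(\nu-\mu+1)$. Your flagged fix for the dichotomy (use $f_1\le 1$ when $|x_P-x_Q|\lesssim l(P)$, and when $|x_P-x_Q|\gg l(P)$ convert the surviving fine-scale factor, whose gain $2^{-(\sigma-\mu)(n+\beta)}$ beats the counting factor $2^{(\sigma-\mu)n}$) is also the correct way to avoid the spurious cube count in the unequal-scale cases.

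The one substantive flaw is your accounting of regime (a), where $R$ is coarser than both $P$ and $Q$, and this is exactly the regime that forces $\gamma_1+\gamma_2>2\beta$. There both spatial factors have denominator $l(R)$ and the points $x_R$, $R\in\mathcal{D}_\sigma$, form a lattice of spacing $l(R)$, so the inner sum is an equal-scale discrete convolution with \emph{no} cube-counting growth at all; the loss is instead that the surviving factor $(1+|x_P-x_Q|/l(R))^{-(n+\beta)}$ has too large a denominator and must be traded for the target $(1+|x_P-x_Q|/l(P))^{-(n+\beta)}$ at the cost $(l(R)/l(P))^{n+\beta}=2^{(\mu-\sigma)(n+\beta)}$. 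Feeding this into the scale sum gives $2^{-(\nu-\mu)(n+\gamma_2)/2}\sum_{\sigma\le\mu}2^{-(\mu-\sigma)[(\gamma_1+\gamma_2)/2-\beta]}$, whose convergence is literally the hypothesis $\gamma_1+\gamma_2>2\beta$. With your stated loss $2^{E(\sigma)n}$ the series would converge for all $\gamma_1,\gamma_2>0$ and the hypothesis would look superfluous --- a sign the bookkeeping cannot be right: taking $P,Q$ of equal side length with $|x_P-x_Q|$ large, the single coarse cube $R$ with $l(R)\approx|x_P-x_Q|$ lying near both already contributes about $(|x_P-x_Q|/l(P))^{-n-(\gamma_1+\gamma_2)/2}$, which is dominated by $W_{P,Q}(\beta,\gamma_1\wedge\gamma_2)\approx(|x_P-x_Q|/l(P))^{-n-\beta}$ only when $(\gamma_1+\gamma_2)/2\ge\beta$. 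In regime (c), by contrast, the genuine counting factor $2^{(\sigma-\nu)n}$ is absorbed using only $\gamma_1,\gamma_2>0$. (A trivial further slip: with $\mu\le\nu$ the endpoint decays surviving from regimes (a) and (c) are $2^{-(\nu-\mu)(n+\gamma_2)/2}$ and $2^{-(\nu-\mu)(n+\gamma_1)/2}$, the opposite of what you wrote; both are $\le 2^{-(\nu-\mu)(n+\gamma_1\wedge\gamma_2)/2}$, so the conclusion is unaffected.)
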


We also use the following homogeneity estimate for a  double sum.
The reader will find it easy to construct the omitted proof.

\begin{lem}\label{apu3}
Let $\beta>\alpha>0$ and $\lambda,\epsilon>0$. Then
\[
\sum_{\nu \in \Z}\sum_{\mu \in \Z}  2^{-|\nu-\mu|\epsilon}
2^{(\mu\wedge \nu)\alpha}
\big(1+(2^\mu\wedge 2^\nu)\lambda \big)^{-\beta}\le 
C_{\alpha,\beta,\epsilon} \lambda^{-\alpha}.
\]
\end{lem}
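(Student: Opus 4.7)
The plan is to reduce the double sum to a single geometric series by parametrizing $(\mu,\nu)$ through $k=\mu\wedge\nu$ and $d=|\mu-\nu|$. For each fixed $k\in\Z$ and $d\in\N_0$, there are at most two pairs $(\mu,\nu)\in\Z^2$ with $\mu\wedge\nu = k$ and $|\mu-\nu|=d$. Thus
\[
\sum_{\nu\in\Z}\sum_{\mu\in\Z} 2^{-|\nu-\mu|\epsilon} 2^{(\mu\wedge\nu)\alpha}\bigl(1+(2^\mu\wedge 2^\nu)\lambda\bigr)^{-\beta}
\le 2\Bigl(\sum_{d\ge 0} 2^{-d\epsilon}\Bigr)\sum_{k\in\Z} 2^{k\alpha}(1+2^k\lambda)^{-\beta}.
\]
The sum in $d$ is a convergent geometric series bounded by $C_\epsilon$, so the task reduces to proving
\[
S(\lambda):=\sum_{k\in\Z} 2^{k\alpha}(1+2^k\lambda)^{-\beta}\le C_{\alpha,\beta}\lambda^{-\alpha}.
\]

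To handle $S(\lambda)$, I would split the sum at the threshold $k_0\in \Z$ characterized by $2^{k_0}\lambda\approx 1$, i.e., $k_0=\lfloor -\log_2 \lambda\rfloor$. For $k\le k_0$ the factor $(1+2^k\lambda)^{-\beta}$ is bounded by $1$ and the sum $\sum_{k\le k_0} 2^{k\alpha}$ is a geometric series with ratio $2^{-\alpha}<1$ (since $\alpha>0$), yielding a bound $C_\alpha 2^{k_0\alpha}\le C_\alpha\lambda^{-\alpha}$. For $k>k_0$ we estimate $(1+2^k\lambda)^{-\beta}\le (2^k\lambda)^{-\beta}$, which turns the tail into $\lambda^{-\beta}\sum_{k>k_0} 2^{k(\alpha-\beta)}$. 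Here the hypothesis $\beta>\alpha$ is decisive: the ratio $2^{\alpha-\beta}<1$, so this geometric series sums to $C_{\alpha,\beta} 2^{k_0(\alpha-\beta)}$, giving $C_{\alpha,\beta}\lambda^{-\beta}\lambda^{\beta-\alpha}=C_{\alpha,\beta}\lambda^{-\alpha}$.

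Adding the two contributions yields $S(\lambda)\le C_{\alpha,\beta}\lambda^{-\alpha}$, and combining with the geometric factor in $d$ gives the claimed bound. There is no real obstacle; the only subtlety worth flagging is that both hypotheses are needed in precisely one place each: $\alpha>0$ ensures convergence of the geometric series on $k\le k_0$, while $\beta>\alpha$ ensures convergence of the tail on $k>k_0$. Also one should note that the bound is homogeneous of degree $-\alpha$ in $\lambda$, which is consistent with the scaling $k\mapsto k-\log_2\lambda$ that would formally reduce to the case $\lambda=1$.
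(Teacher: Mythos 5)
Your proof is correct, and since the paper omits the proof of this lemma altogether (leaving it as an easy exercise for the reader), your reduction to $k=\mu\wedge\nu$, $d=|\mu-\nu|$ followed by the split of $\sum_k 2^{k\alpha}(1+2^k\lambda)^{-\beta}$ at $2^{k_0}\approx\lambda^{-1}$ is exactly the intended elementary argument. The only implicit point worth making explicit is that the tail estimate uses the lower bound $2^{k_0}\ge (2\lambda)^{-1}$ (not just $2^{k_0}\le\lambda^{-1}$) to convert $2^{k_0(\alpha-\beta)}$ into $C\lambda^{\beta-\alpha}$, which your choice $k_0=\lfloor-\log_2\lambda\rfloor$ indeed provides.
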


We are ready for our converse result; recall  Definitions
\ref{potop} and \ref{adpmaar}.

\begin{thm}\label{kernel_theo}
Suppose that the operator 
$T:\mathcal{S}_0\to \mathcal{S}'/\mathcal{P}$
is an almost diagonal potential operator in the sense of 
Definition \ref{adpmaar}; that is, 
$T\in\mathrm{ADP}(\epsilon)$. Then $T$ is 
associated with a standard kernel of order $-1$
so that for every $f\in\mathcal{S}_0$, we have
\[
Tf(x)=\int_{\R^n} K(x,y)f(y){\rm{d}}y.
\]
Here $K:\R^n\times \R^n\setminus\{(x,x)\}\to \C$ is continuous
and satisfies \eqref{ee}--\eqref{kk} with any $\delta>0$ such
that $\delta<\epsilon/2$ and $\delta\le 1$; that is,
$T\in\mathrm{SK}^{-1}(\delta)$. Also, we have
$T1=0=T^t1$ in the sense of Definition \ref{t1}.
\end{thm}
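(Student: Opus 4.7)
The plan is to build the kernel $K$ explicitly from the almost-diagonal data and then verify the four requirements in turn.

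\textbf{Kernel construction.} Setting $a_{PQ}:=\langle T(\psi_P),\phi_Q\rangle$, I would define
\[
K(x,y):=\sum_{P,Q\in\mathcal{D}} a_{PQ}\,\psi_Q(x)\,\phi_P(y),
\]
motivated by applying the $\phi$-transform identity (Theorem \ref{phi_ide}) to expand $f=\sum_P\langle f,\phi_P\rangle\psi_P$ inside $T$ and then pairing $T\psi_P$ against the companion identity $g=\sum_Q\langle g,\psi_Q\rangle\phi_Q$ for a test function $g\in\mathcal{S}_0$ (valid by the symmetry of $\hat\phi\,\hat\psi$ in the Littlewood--Paley reconstruction). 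Using $|a_{PQ}|\le C\omega_{P,Q}(\epsilon)$ together with the Schwartz bounds $|\psi_Q(x)|\le C_N\,l(Q)^{-n/2}(1+|x-x_Q|/l(Q))^{-N}$ (and analogously for $\phi_P$ in $y$) for every $N$, the size estimate proved in the next step implies locally uniform convergence of the double sum on $\R^n\times\R^n\setminus\{(x,x)\}$, so $K$ is continuous off the diagonal. Fubini plus absolute convergence then yields $\langle Tf,g\rangle=\iint K(x,y)f(y)g(x)\,dx\,dy$ for $f,g\in\mathcal{S}_0$, hence the pointwise formula $Tf(x)=\int K(x,y)f(y)\,dy$.

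\textbf{Size and regularity.} For \eqref{ee}, I would organize the double sum by scales $\mu=-\log_2 l(P)$ and $\nu=-\log_2 l(Q)$ and perform the standard bump-summation in positions. For fixed $\mu,\nu$ the inner sum is essentially controlled by
\[
(l(P)\wedge l(Q))\cdot 2^{-|\mu-\nu|(n+\epsilon)/2}\bigl(1+|x-y|/(l(P)\vee l(Q))\bigr)^{-(n+\epsilon)},
\]
since the bump decays force $x_Q\approx x$ and $x_P\approx y$ while $\omega_{P,Q}(\epsilon)$ supplies decay between $x_P$ and $x_Q$. Lemma \ref{apu3} applied with $\alpha=n-1$ then collapses the remaining double sum over $(\mu,\nu)$ to $C|x-y|^{1-n}$. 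For the regularity \eqref{tt} I would difference the series termwise, obtaining $\sum_{P,Q}a_{PQ}\,(\Delta_h\psi_Q)(x)\,\phi_P(y)$, and use two bounds on $\Delta_h\psi_Q$: the trivial bound $|\Delta_h\psi_Q(x)|\le C_N\,l(Q)^{-n/2}(1+|x-x_Q|/l(Q))^{-N}$ when $l(Q)\le|h|$, and the smoothness bound $|\Delta_h\psi_Q(x)|\le C_N\,(|h|/l(Q))^{1+\delta}\,l(Q)^{-n/2}(1+|x-x_Q|/l(Q))^{-N}$ when $l(Q)>|h|$ (valid for any $\delta\le 1$ by Taylor expansion on the Schwartz function $\psi$). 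Running the same scale sum with the additional factor $(|h|/l(Q))^{1+\delta}$ and combining the two sub-regions yields the desired bound $C|h|^{1+\delta}|x-y|^{-n-\delta}$, provided $\delta<\epsilon/2$. The estimate \eqref{kk} is handled identically by differencing in the $y$-variable and using the smoothness of $\phi_P$.

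\textbf{Vanishing of $T1$ and $T^t1$.} Corollary \ref{jatkuus} with $\alpha=-1$, $p=1$, $q=2$ gives bounded extensions $T,T^t\colon\dot F^{-1,2}_1\to\dot F^{0,2}_1\approx H^1$. Since $\mathcal{S}_0\subset\dot F^{-1,2}_1$, the images $T\psi_Q$ and $T^t\phi_Q$ are $H^1$-functions and hence integrate to zero. With $T\in\mathrm{SK}^{-1}(\delta)$ now in hand, the remark following Lemma \ref{ar1} identifies $\lim_{j\to\infty}\langle T(\eta^j),\phi_Q\rangle=\int T^t\phi_Q=0$ (and the same with $\psi_Q$ in place of $\phi_Q$), which is precisely $T1=0$ in the sense of Definition \ref{t1}; the argument for $T^t1=0$ is symmetric.

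\textbf{Main obstacle.} Essentially all the work sits in the second-order regularity estimate. The scale sums must be split delicately at $l(Q)\asymp|h|$ so that the trivial and smoothness bounds patch together, and the exponent constraint $\delta<\epsilon/2$ is precisely what is needed to keep both sub-sums convergent against the almost-diagonal decay $2^{-|\mu-\nu|(n+\epsilon)/2}$. This manifests the well-known principle that almost-diagonality of order $\epsilon$ provides kernel regularity strictly less than $\epsilon/2$.
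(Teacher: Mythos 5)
Your proposal follows essentially the same route as the paper: the same double series defining $K$, the same scale-by-scale bump summation (which the paper packages via Lemma \ref{apu1}) followed by Lemma \ref{apu3} under the constraint $2\delta<\epsilon$, the same split at $l\asymp|h|$ for the second-order differences, the same dominated-convergence identification of $\langle Tf,g\rangle$ with the kernel integral, and the same $H^1$ argument for $T1=0=T^t1$. One small correction: in the regime $l(Q)\le|h|$ your stated ``trivial bound'' with the single factor $(1+|x-x_Q|/l(Q))^{-N}$ is false when $|h|\gg l(Q)$; one must keep the three bumps coming from $\psi_Q(x+h)$, $\psi_Q(x-h)$, $\psi_Q(x)$ separately, and the resulting $|x-y\pm h|^{-n-\delta}$ terms are then absorbed into $|x-y|^{-n-\delta}$ using $|h|\le|x-y|/2$, exactly as in the paper's treatment of that case.
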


\begin{proof}
Let $x,y\in \R^n$ and denote
\[
K(x,y)=\sum_{Q \in \mathcal{D}} \sum_{P\in \mathcal{D}}
\langle T(\psi_P),\phi_Q\rangle \phi_P(y) \psi_Q(x).
\]
We first prove the estimate \eqref{ee} for this kernel. This
proof will also show that the series above
converges absolutely for any $x\not=y$.
For $x,y\in \R^n$ with $x\not=y$, we have
\begin{equation}\label{size}
|K(x,y)| \le C_T\sum_{Q \in \mathcal{D}} \sum_{P\in \mathcal{D}}
\big(l(P)\wedge l(Q)\big)W_{Q,P}(\epsilon,\epsilon) |\phi_P(y)| 
|\psi_Q(x)|.
\end{equation}
Let $P\in\mathcal{D}_\mu$ and 
 $R\in \mathcal{D}_\mu$ be such that  $y\in R$.
Using  $|\phi_P(y)|\le C_{\phi,\epsilon} 2^{\mu n/2}
(1+\frac{|x_P-y|}{l(P)})^{-n-\epsilon}$, we get
\[
|\phi_P(y)|\le C_{\phi,\epsilon} 2^{\mu n/2}
\bigg(1+\frac{|x_P-x_R|}{l(P)\vee l(R)}\bigg)^{-n-\epsilon}=
C_{\phi,\epsilon}2^{\mu n/2}W_{P,R}(\epsilon,1+\epsilon).
\]
Using the estimates above and Lemma \ref{apu1}, we obtain for any
given $Q\in \mathcal{D}_\nu$
and fixed $\mu\in\Z$ the estimates
\begin{eqnarray*}
\sum_{P\in\mathcal{D}_\mu} W_{Q,P}(\epsilon,\epsilon)|\phi_P(y)|
&\le&
C_{\phi,\epsilon} 2^{\mu n/2} \sum_{P\in\mathcal{D}} W_{Q,P}
(\epsilon,\epsilon)W_{P,R}(\epsilon,1+\epsilon)\\
&\le& C_{\phi,\epsilon} 2^{\mu n/2} W_{Q,R}(\epsilon,\epsilon).
\end{eqnarray*}
Let $Q\in\mathcal{D}_\nu$
and $S\in\mathcal{D}_\nu$ be such that
$x\in S$; computing as above, we find that
$|\psi_Q(x)|\le C_{\psi,\epsilon} 2^{\nu n/2}W_{S,Q}
(\epsilon,1+\epsilon)$ and then that
\begin{eqnarray*}
2^{\mu n/2}\sum_{Q\in\mathcal{D}_\nu} |\psi_Q(x)|
W_{Q,R}(\epsilon,\epsilon)
&\le& C_{\phi,\psi,\epsilon} 2^{\mu n/2+\nu n/2} 
W_{S,R}(\epsilon,\epsilon).
\end{eqnarray*}
Write $\sum_{P\in \mathcal{D}}\sum_{Q\in \mathcal{D}} 
=\sum_{\nu\in \Z}\sum_{\mu\in \Z}\sum_{Q\in\mathcal{D}_\nu}
\sum_{P\in\mathcal{D}_\mu}$
and 
$l(P)\wedge l(Q)=2^{-(\mu\vee\nu)}$ in \eqref{size}. Using
the two estimates above and the
identities $\mu\wedge\nu\le \mu\vee\nu$ and
$\mu+\nu-|\nu-\mu|=2(\mu\wedge \nu)$, we arrive at
\begin{eqnarray*}
|K(x,y)|&\le& C_{T,\phi,\psi,\epsilon}\sum_{\nu \in \Z}
\sum_{\mu \in \Z}  2^{-(\mu\vee \nu)+\mu n/2+\nu n/2} 
W_{S,R}(\epsilon,\epsilon)\\
&=&C_{T,\phi,\psi,\epsilon}\sum_{\nu \in \Z}\sum_{\mu \in \Z}  
2^{-(\mu\vee \nu)+\mu n/2+\nu n/2}2^{-|\nu-\mu|(n+\epsilon)/2}
\bigg(1+\frac{|x_S-x_R|}{l(S)\vee l(R)}\bigg)^{-n-\epsilon}\\
&\le& C_{T,\phi,\psi,\epsilon}\sum_{\nu \in \Z}\sum_{\mu \in \Z}  
2^{-|\nu-\mu|\epsilon/2}2^{(\mu\wedge \nu)(n-1)}
\big(1+(2^\mu\wedge 2^\nu)|x-y|\big)^{-n-\epsilon}.
\end{eqnarray*}
Note that $n+\epsilon>n-1>0$ and $\epsilon/2>0$; therefore using
Lemma \ref{apu3}, we get
$|K(x,y)|\le C_{T,\phi,\psi,\epsilon}|x-y|^{1-n}$.
This is the estimate \eqref{ee}.

Then we  prove \eqref{kk} for $K$;
fix $x,y,h\in \R^n$ such that $x\not=y$ and
$|h|\le |x-y|/2$. Then
\begin{multline}\label{difference}
|K(x,y+h)+K(x,y-h)-2K(x,y)| \\ \le C_T \sum_{Q \in \mathcal{D}} 
\sum_{P\in \mathcal{D}}
\big(l(P)\wedge l(Q)\big)W_{Q,P}(\epsilon,\epsilon) 
|(\Delta_h \phi_P)(y)| |\psi_Q(x)|.
\end{multline}
{\em Assume first that $P\in\mathcal{D}_\mu$ is such that $|h|\le 2^{-\mu}=l(P)$.} If
$R\in\mathcal{D}_\mu$ is such that $y\in R$, then a short computation
shows that
\[
|(\Delta_h\phi_P)(y)|\le
C_{\phi,\epsilon}2^{\mu(n/2+1+\delta)}|h|^{1+\delta}W_{P,R}
(\epsilon,1+\epsilon).
\]
Let $S\in\mathcal{D}_\nu$ be such that $x\in S$; then
computing as before and using Lemma \ref{apu3} with assumption
$2\delta<\epsilon$, we find that
\begin{eqnarray*}
&&\sum_{\nu\in\Z}\sum_{\mu:|h|\le 2^{-\mu}}
\sum_{Q\in\mathcal{D}_\nu}\sum_{P\in\mathcal{D}_\mu}
\big(l(P)\wedge l(Q)\big)W_{Q,P}(\epsilon,\epsilon) 
|(\Delta_h\phi_P)(y)| |\psi_Q(x)|\\
&&\le C_{T,\phi,\psi,\epsilon}|h|^{1+\delta}\sum_{\nu \in \Z}
\sum_{\mu \in \Z}  2^{-(\mu\vee \nu)+\nu n/2+\mu n/2+\mu+\mu\delta}
W_{S,R}(\epsilon,\epsilon)\\
&&\le C_{T,\phi,\psi,\epsilon}|h|^{1+\delta}\sum_{\nu \in \Z}
\sum_
{\mu \in \Z}  2^{-|\nu-\mu|(\epsilon-2\delta)/2}
2^{(\mu\wedge \nu)(n+\delta)}
\big(1+(2^\mu\wedge 2^\nu)|x-y|\big)^{-n-\epsilon}\\
&&\le C_{T,\phi,\psi,\epsilon,\delta}
|h|^{1+\delta}|x-y|^{-n-\delta}.
\end{eqnarray*}
{\em Assume then that $P\in \mathcal{D}_\mu$ is such that
$|h|>2^{-\mu}=l(P)$};  then $1< (2^\mu |h|)^{1+\delta}$ and 
therefore
\begin{eqnarray*}
|(\Delta_h \phi_P)(y)|
&\le& C_{\phi,\epsilon}2^{\mu(n/2+1+\delta)}|h|^{1+\delta} 
(1+2^{\mu}|y+h-x_P|)^{-n-\epsilon}\\
&&+C_{\phi,\epsilon} 
2^{\mu(n/2+1+\delta)}|h|^{1+\delta}(1+2^{\mu}
|y-h-x_P|)^{-n-\epsilon}\\
&&+C_{\phi,\epsilon} 2^{\mu(n/2+1+\delta)}
|h|^{1+\delta}(1+2^{\mu}|y-x_P|)^{-n-\epsilon}.
\end{eqnarray*}
Computing further as in the case $|h|\le l(P)$ we get
\begin{eqnarray*}
&&\sum_{\nu\in\Z}\sum_{\mu:|h|>2^{-\mu}}
\sum_{Q\in\mathcal{D}_\nu}\sum_{P\in\mathcal{D}_\mu}
\omega_{QP}(\epsilon,\epsilon) |(\Delta_h\phi_P)(y)| 
|\psi_Q(x)|\\
&&\le  C_{T,\phi,\psi,\epsilon,\delta}|h|^{1+\delta}
\big(|x-y-h|^{-n-\delta}+|x-y+h|^{-n-\delta}+
|x-y|^{-n-\delta}\big)\\
&&\le  C_{T,\phi,\psi,\epsilon,\delta}
|h|^{1+\delta}|x-y|^{-n-\delta},
\end{eqnarray*}
because $|h|\le |x-y|/2$.
Inserting the above estimates in \eqref{difference}, we have
\[
|K(x,y+h)+K(x,y-h)-2K(x,y)| \le  
C_{T,\phi,\psi,\epsilon,\delta}|h|^{1+\delta}|x-y|^{-n-\delta}.
\]
This concludes the proof of \eqref{kk}. 
The proof of \eqref{tt} and for
the continuity of the kernel
is similar to this; in the latter  use first order differences.

It remains to show that operator $T$ is associated
with the standard kernel $K$ of order $-1$,
and $T1=0=T^t1$. Fix $f,g\in\mathcal{S}_0$. Then
using Theorem \ref{phi_ide} and Lemma \ref{apuva},
we obtain
\[
\langle Tf,g\rangle = 
\sum_{Q \in \mathcal{D}} \sum_{P\in \mathcal{D}}
\langle T(\psi_P),\phi_Q\rangle \langle \phi_P,
f\rangle\langle \psi_Q,
g\rangle.
\]
Using dominated convergence theorem 
with \eqref{maximal} and above kernel size estimate
beginning from \eqref{size}, 
we have
\[
\langle Tf,g\rangle = \int_{\R^n}\int_{\R^n}ÊK(x,y)f(y)g(x) 
{\rm d}y{\rm d}x.
\]
Thus $T$ is associated with the standard kernel $K$ of order $-1$. The 
conclusion
$T1=0=T^t1$ follows using Corollary \ref{jatkuus}
and the properties of $\dot F^{02}_1\approx H^1$.
\end{proof}

\begin{rmk}
Assume that $T\in\mathrm{SK}^{-1}(\delta)$ is associated
with standard kernel $K$ of order $-1$, and that $T1=0=T^t1$. Theorem
\ref{isolause} states that $T\in\mathrm{ADP}(\delta)$ and
Theorem \ref{kernel_theo} combined with
the kernel uniqueness, Proposition \ref{uniq},
implies that we have the identity
\[
K(x,y)=\sum_{Q \in \mathcal{D}} \sum_{P\in \mathcal{D}}
\langle T(\psi_P),\phi_Q\rangle \phi_P(y) \psi_Q(x)
\]
for every $(x,y)\in \R^n\times \R^n\setminus\{(x,x)\}$.
Using this representation it is possible to prove
different regularity results for the kernel $K$; as
an example one can obtain a homogeneity estimate for
the first order differences of the kernel $K$.
\end{rmk}

\section{A full $T1$ theorem via paraproducts}\label{kuussec}

Here the special $T1$ theorem is generalized so that
the assumption $T1=0=T^t1$ is
replaced by a weaker assumption $T1,T^t1\in \dot F^{12}_\infty$.
Under this weaker assumption the
operator $T\in\mathrm{SK}^{-1}(\delta)$ has a bounded extension
$\dot W^{\alpha, p}\to \dot W^{1+\alpha,p}$ for all $1<p<\infty$
and $-1\le \alpha\le 0$.

We also obtain a sharpness result stating
that if $T\in \mathrm{SK}^{-1}(\delta)$ is a potential operator
in the sense of Definition \ref{potopp},
then $T1,T^t1\in \dot F^{12}_\infty$.
Combining the results above, we find
that if $T\in\mathrm{SK}^{-1}(\delta)$ is a potential
operator, then it has 
a bounded extension
$\dot W^{\alpha, p}\to \dot W^{1+\alpha,p}$
for all $-1\le \alpha\le 0$ and $1<p<\infty$.

\subsection{A full $T1$ theorem with a converse}

The full $T1$ theorem is
obtained via reduction to Theorem \ref{isolause}
with the aid of  potential operators $\Pi_b$ and $\Pi^t_b$,
$b\in \dot F^{12}_\infty$, satisfying
\begin{itemize}
\item $\Pi_b 1=b$ and $\Pi_b^t1 =0$;
\item $\Pi_b$ and $\Pi_b^t$ are 
bounded $\dot F^{\alpha 2}_p\to \dot F^{1+\alpha,2}_p$ for all
  $1<p<\infty$ and $-1\le \alpha\le 0$;
\item $\Pi_b,\Pi_b^t\in \mathrm{SK}^{-1}(1)$.
\end{itemize}
The very end of this section is devoted to the construction of
such potential operators, known as {\em paraproduct operators}.
But assuming these properties for now, we can recover the full 
$T1$ theorem:

\begin{thm}\label{full}
Let $T\in \mathrm{SK}^{-1}(\delta)$ and
$T1,T^t1\in \dot F^{12}_\infty$.
Then $T$ has an
extension as a bounded operator 
$\dot F^{\alpha 2}_p\to \dot F^{1+\alpha,2}_p$
for all $1<p<\infty$ and $-1\le\alpha\le 0$.
\end{thm}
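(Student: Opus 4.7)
The plan is to reduce the full $T1$ theorem to the special $T1$ theorem (Theorem \ref{isolause}) using the paraproducts $\Pi_b$ listed immediately above the statement. Setting $b_1 = T1$ and $b_2 = T^t1$, which belong to $\dot F^{12}_\infty$ by hypothesis, I would form the modified operator
\[
\tilde T \;=\; T \,-\, \Pi_{b_1} \,-\, \Pi_{b_2}^t.
\]
Because $\mathrm{SK}^{-1}(\delta)$ is a vector space closed under transposition and $\Pi_{b_1},\Pi_{b_2}\in\mathrm{SK}^{-1}(1)\subset \mathrm{SK}^{-1}(\delta)$, the operator $\tilde T$ lies in $\mathrm{SK}^{-1}(\delta)$.

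Next I would verify the cancellation $\tilde T1 = 0 = \tilde T^t 1$ in the sense of Definition \ref{t1}. The operation $T\mapsto T1$ is manifestly linear in $T$ from its definition, so using $\Pi_b 1 = b$ and $\Pi_b^t 1 = 0$ we get
\[
\tilde T 1 \;=\; b_1 - b_1 - 0 \;=\; 0, \qquad \tilde T^t 1 \;=\; b_2 - 0 - b_2 \;=\; 0.
\]
By the remark following Theorem \ref{isolause} (which identifies $T1=0=T^t1$ with the vanishing integral hypothesis of that theorem), $\tilde T$ falls within the scope of the special $T1$ theorem. Corollary \ref{jatkuus} therefore yields bounded extensions $\tilde T:\dot F^{\alpha q}_p\to \dot F^{1+\alpha,q}_p$ for all $-1\le\alpha\le 0$ and $1\le p,q<\infty$; taking $q=2$ and $1<p<\infty$ gives the range needed here.

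Finally, writing $T = \tilde T + \Pi_{b_1} + \Pi_{b_2}^t$ and invoking the stated boundedness of the paraproducts $\dot F^{\alpha 2}_p\to \dot F^{1+\alpha,2}_p$ on the same range, we conclude that $T$ has the asserted bounded extension. The main obstacle is not in this reduction, which is essentially formal once the paraproducts are in hand, but in actually constructing operators $\Pi_b$ for $b\in\dot F^{12}_\infty$ enjoying the three bulleted properties above (boundedness on the correct Triebel--Lizorkin scale, the $T1$ and $T^t1$ identities, and membership in $\mathrm{SK}^{-1}(1)$); this construction is where the real work of the remainder of the section will lie.
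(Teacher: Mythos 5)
Your proposal is correct and follows essentially the same route as the paper: subtract the paraproducts $\Pi_{T1}$ and $\Pi^t_{T^t1}$, check that the resulting operator satisfies the hypotheses of the special $T1$ theorem (Theorem \ref{isolause}), and add the paraproducts back using their boundedness. The only cosmetic difference is that the paper verifies the vanishing integrals $\int S(\psi_Q)=\int S^t(\phi_Q)=0$ directly via the $\eta^j$ limits, whereas you check $\tilde T1=0=\tilde T^t1$ by linearity in the sense of Definition \ref{t1} and then invoke the equivalence recorded in the remark after Theorem \ref{isolause}; both are legitimate within the paper's framework.
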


\begin{proof}
Let $a=T1$, $b=T^t1$, and 
$S = T-\Pi_a-\Pi^t_b$. Because $\Pi_a,\Pi^t_b\in \mathrm{SK}^{-1}(1)$,
we see that $S\in \mathrm{SK}^{-1}(\delta)$.
Moreover using the assumptions and the 
properties of paraproducts above, we have
for any $Q\in \mathcal{D}$
\[
\int_{\R^n} S(\psi_Q)(x){\rm{d}}x
=\lim_{j\to\infty} \langle \eta^j,S (\psi_Q)\rangle =
\lim_{j\to\infty} \langle S^t(\eta^j),\psi_Q\rangle
=0.
\]
In a similar way, we get
\[
\int_{\R^n} S^t(\phi_Q)(x){\rm{d}}x
=\lim_{j\to\infty} \langle S(\eta^j),\phi_Q\rangle
=0.
\]
Theorem \ref{isolause} implies that
$S:\dot F^{\alpha 2}_p\to \dot F^{1+\alpha,2}_p$ is a bounded
operator. Using the boundedness properties of $\Pi_b$, we
see that $T=S+\Pi_a+\Pi^t_b:\dot F^{\alpha 2}_p\to 
\dot F^{1+\alpha,2}_p$ 
is a bounded
operator.
\end{proof}

\begin{rmk}
Choose $b=\overline{\phi_{Q}}\in \dot F^{12}_\infty$.
Then $\langle b,\phi_Q\rangle = ||\phi||_2^2\not=0$
and therefore $\Pi_b 1\not=0$.
This shows the existence of a potential 
operator $\Pi_b$ not satisfying all the assumptions
of Theorem \ref{isolause}. 
\end{rmk}

Recall that the assumption $T1,T^t1\in {\mathrm{BMO}}$ is sharp in classical
$T1$ theorem \cite{D-J}. The following result describes a similar
sharpness result in our context:

\begin{thm}\label{t1conv}
Assume that $T\in\mathrm{SK}^{-1}(\delta)$ is a potential
operator. In other words, assume that it has a bounded extension
$\dot F^{\alpha 2}_2\to \dot F^{1+\alpha,2}_2$ for
all $-1\le\alpha\le 0$. Then $T1,T^t1\in \dot F^{12}_\infty$.
\end{thm}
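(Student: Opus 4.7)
The plan is to produce $T1$ through its natural discrete coefficients
$s_Q:=\int_{\R^n}T^t(\phi_Q)(x)\,{\rm d}x$ and $t_Q:=\int_{\R^n}T^t(\psi_Q)(x)\,{\rm d}x$,
and to verify directly that $\{s_Q\}_Q\in\dot f^{12}_\infty$.
Lemma \ref{ar1} ensures these integrals are finite
(in fact $T^t\phi_Q,T^t\psi_Q\in L^1\cap L^2$), and since $\eta^j\in\mathcal{S}_0$
satisfies $\eta^j\to 1$ pointwise with $\|\eta^j\|_\infty\le C$,
dominated convergence applied to $\langle T\eta^j,\phi_Q\rangle=\langle\eta^j,T^t\phi_Q\rangle$
yields $\lim_j\langle T\eta^j,\phi_Q\rangle=s_Q$, and similarly for $\psi_Q$. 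Thus if
I can find $f\in\dot F^{12}_\infty$ whose $\phi$- and $\psi$-pairings are
$s_Q$ and $t_Q$, then $T1=f$ in the sense of Definition \ref{t1}.
The parallel statement for $T^t1$ is immediate by symmetry, since $T$
being a potential operator forces $T^t$ to be one as well: the range
$-1\le\alpha\le 0$ is self-dual under $(\dot W^{\alpha,2})'=\dot W^{-\alpha,2}$.

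The heart of the proof is the Carleson estimate
\[
\sup_{P\in\mathcal{D}}\frac{1}{|P|}\sum_{Q\subset P}l(Q)^{-2}|s_Q|^2 < \infty,
\]
which, since $|Q|^{-1/n-1/2+1/2}=l(Q)^{-1}$, is exactly the
$\dot f^{12}_\infty$-norm. I would fix a dyadic $P$ and split
$1=\chi_{3P}+(1-\chi_{3P})$ with $3P$ the concentric threefold dilate.
For the local piece, duality gives
$\int T^t\phi_Q\cdot\chi_{3P}=\langle\phi_Q,T\chi_{3P}\rangle$
(legitimate because $T^t\phi_Q\in L^2$ by Lemma \ref{ar1}); the potential
operator hypothesis then yields $\|T\chi_{3P}\|_{\dot F^{12}_2}\le C|P|^{1/2}$,
and the boundedness of $S_\phi:\dot F^{12}_2\to\dot f^{12}_2$ from
Theorem \ref{jatkuv} upgrades this to
$\sum_Q l(Q)^{-2}|\langle\phi_Q,T\chi_{3P}\rangle|^2\le C|P|$, a fortiori
bounding the sum over $Q\subset P$. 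For the far piece, Lemma \ref{ar1}
combined with $|x-x_Q|\ge l(P)$ for $x\notin 3P$ and $x_Q\in P$ gives
$|\int_{(3P)^c}T^t\phi_Q|\le C|Q|^{1/2}l(Q)(l(Q)/l(P))^\delta$; summing
$l(Q)^{-2}|\cdot|^2$ over $Q\subset P$ is then a convergent geometric
series bounded by $C|P|$.

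With $\{s_Q\}\in\dot f^{12}_\infty$ established, assembly proceeds as
follows. The limit $g:=\lim_j T\eta^j$ exists in $\mathcal{S}'/\mathcal{P}$:
using Theorem \ref{phi_ide} to write $\langle T\eta^j,\gamma\rangle=
\sum_Q\langle T\eta^j,\phi_Q\rangle\langle\psi_Q,\gamma\rangle$ for each
$\gamma\in\mathcal{S}_0$, the uniform-in-$j$ bound
$|\langle T\eta^j,\phi_Q\rangle|\le C\|T^t\phi_Q\|_1$ paired with the
rapid decay of $\langle\psi_Q,\gamma\rangle$ (from $\gamma\in\mathcal{S}_0$)
permits termwise passage to the limit. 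Then $S_\phi g=\{s_Q\}$, so
Theorem \ref{phi_ide} gives $g=T_\psi S_\phi g=T_\psi(\{s_Q\})$, and
this lies in $\dot F^{12}_\infty$ by Theorem \ref{jatkuv} with the
correct pairings, so $T1=g$. The main technical obstacle is the
local-part duality: $\chi_{3P}\notin\mathcal{S}_0$, so one must invoke
the bounded $L^2\to\dot W^{1,2}$ extension of $T$ guaranteed by
Definition \ref{potopp} and verify that
$\int T^t\phi_Q\cdot\chi_{3P}=\langle\phi_Q,T\chi_{3P}\rangle$ holds
for the extension, which follows by approximating $\chi_{3P}$ in $L^2$
and using $T^t\phi_Q\in L^2$ to identify both sides with the ordinary
$L^2$-inner product.
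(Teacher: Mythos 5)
Your proof is correct, and its core estimate is the same one the paper uses: a near/far decomposition relative to a fixed dyadic cube $P$, with the near part controlled by the assumed $\dot F^{02}_2\to\dot F^{12}_2$ boundedness together with Theorem \ref{jatkuv}, and the far part by the pointwise decay of $T^t(\phi_Q)$ from Lemma \ref{ar1} plus a geometric series over $Q\subset P$. Where you genuinely diverge is in what gets estimated and how the limit is assembled. The paper estimates the approximants: writing $\eta^j$ as a normalized $\phi_R$, it proves the uniform bound $\|T(\eta^j)\|_{\dot F^{12}_\infty}\le C$ (splitting $\phi_R=\chi_B\phi_R+(1-\chi_B)\phi_R$ and treating the cases $|R|\le|P|$ and $|P|\le|R|$ separately), then invokes $(\dot F^{-1,2}_1)'\approx\dot F^{12}_\infty$ and Banach--Alaoglu to extract a weak* limit, which is identified with $T1$ via $\lim_j\langle T(\eta^j),\phi_Q\rangle=\int T^t(\phi_Q)$. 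You instead estimate the limiting coefficients $s_Q=\int T^t(\phi_Q)$ directly, splitting $1=\chi_{3P}+(1-\chi_{3P})$, verify the Carleson condition $\{s_Q\}\in\dot f^{12}_\infty$, and reconstruct $T1=T_\psi(\{s_Q\})$; this trades the compactness and duality step for a termwise passage to the limit in the $\phi$-transform expansion of $T(\eta^j)$, which your domination $|\langle T(\eta^j),\phi_Q\rangle|\le C\|T^t(\phi_Q)\|_1\le Cl(Q)|Q|^{1/2}$, combined with the decay of $\langle\psi_Q,\gamma\rangle$ in scale and space for $\gamma\in\mathcal{S}_0$, does support -- and you are right to first identify $S_\phi g=\{s_Q\}$ and only then apply $T_\psi S_\phi=\id$, since $S_\phi T_\psi$ is not the identity on sequences. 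Your route is more constructive (no separability or weak* compactness needed) and isolates the Carleson condition on $\int T^t(\phi_Q)$ as the real content; the paper's route only ever estimates the honest distributions $T(\eta^j)$ and so avoids the termwise-limit bookkeeping. Your density argument justifying $\int T^t(\phi_Q)\chi_{3P}=\langle\phi_Q,T\chi_{3P}\rangle$ for the $L^2$-extension is sound ($\mathcal{S}_0$ is dense in $L^2$ and $T^t(\phi_Q)\in L^2$ by Lemma \ref{ar1}), and note that an analogous implicit extension step already occurs in the paper, which tests $T$ on $\chi_B\phi_R\notin\mathcal{S}_0$; likewise your use of Lemma \ref{ar1} for $T^t(\psi_Q)$ is legitimate because the same proof applies to $T^t\in\mathrm{SK}^{-1}(\delta)$.
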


\begin{proof}
We show that $T1\in \dot F^{12}_\infty$; the proof
for $T^t$ is completely analogous.
We first prove that the family $\{T(\eta^j)\}_{j\in \N}$ is 
bounded in  $\dot F^{12}_\infty$. Let $R=R_{-j,0}\in\mathcal{D}$; 
that is, $l(R)=2^j$ and $x_{R}=0$. Then 
$\phi(0)|R|^{-1/2}\eta^j=\phi_R$ and 
using Theorem \ref{jatkuv}, we have
\begin{eqnarray*}
||T(\phi_R)||_{\dot F^{12}_\infty}&\le& C||\{\langle T(\phi_R),
\phi_Q\rangle\}_{Q\in\mathcal{D}}||_{\dot f^{12}_\infty}\\
&=&C\sup_{P\in \mathcal{D}}
\bigg( \frac{1}{|P|} \sum_{Q\subset P} 
\big(|Q|^{-1/n}|\langle T(\phi_R),\phi_Q\rangle |\big)^2
\bigg)^{1/2}.
\end{eqnarray*}
Fix $P\in\mathcal{D}_\mu$; assume first that $|R|\le |P|$. Then
using the boundedness assumption and Theorem
\ref{jatkuv}, we obtain the estimate
\begin{eqnarray*}
&&\bigg(\frac{1}{|P|}\sum_{Q\subset P}
\big(|Q|^{-1/n}|
\langle T(\phi_R),\phi_Q\rangle|\big)^2\bigg)^{1/2}
\le ||
\{\langle T(|P|^{-1/2}\phi_R),
\phi_Q\rangle\}_{Q\in\mathcal{D}}||_{\dot f^{12}_2}\\
&&\le C||T(|P|^{-1/2}\phi_R)||_{\dot F^{12}_2}
\le C|P|^{-1/2}||\phi_R||_{\dot F^{02}_2} 
\le C|R|^{-1/2}.
\end{eqnarray*}
Assume then that $|P|\le |R|$ and denote 
$B=B(x_P,2\sqrt nl(P))$.
Using  Theorem \ref{jatkuv}, we have
\begin{eqnarray*}
&&\bigg(\frac{1}{|P|}\sum_{Q\subset P}
\big(|Q|^{-1/n}|\langle T(\phi_R),
\phi_Q\rangle|\big)^2\bigg)^{1/2}\\
&&=\bigg(\frac{1}{|P|}\sum_{Q\subset P}
\big(|Q|^{-1/n}|\langle T(\chi_B\phi_R+(1-\chi_B)\phi_R),
\phi_Q\rangle|\big)^2\bigg)^{1/2}\\
&&\le C_{\phi,\psi}|P|^{-1/2}
||T(\chi_B\phi_R)||_{\dot F^{12}_2}+\bigg(\frac{1}{|P|}
\sum_{Q\subset P}
\big(|Q|^{-1/n}|\langle (1-\chi_B)\phi_R,
T^t(\phi_Q)\rangle|\big)^2\bigg)^{1/2}.
\end{eqnarray*}
First term on the right hand side is easier to estimate;
using the 
boundedness
assumption for $T$ and the estimate 
$||\phi_R||_\infty\le C_\phi|R|^{-1/2}$, we get
\begin{eqnarray*}
|P|^{-1/2}||T(\chi_B\phi_R)||_{\dot F^{12}_2}
&\le& C_T|P|^{-1/2}||\chi_B\phi_R||_{\dot F^{02}_2}\\
&\le& C_{T,\phi}|P|^{-1/2}|R|^{-1/2}|P|^{1/2}=
C_{T,\phi}|R|^{-1/2}.
\end{eqnarray*}
Then we look at the second term. Fix a dyadic cube $Q$ with 
$Q\subset P$. 
Then 
using Lemma \ref{ar1}
and the estimate $||\phi_R||_\infty\le C_\phi|R|^{-1/2}$, we 
have
\begin{equation}\label{eski}
\begin{split}
&|\langle (1-\chi_B)\phi_R,T^t(\phi_Q)\rangle| \le 
\int_{\R^n\setminus B}
|\phi_R(x)||T^t(\phi_Q)(x)|{\rm d}x\\
&\le C_{T,\phi,\psi} |R|^{-1/2}l(Q)|Q|^{-1/2} 
\int_{\R^n\setminus B} 
\bigg(1+\frac{|x-x_Q|}{l(Q)}\bigg)^{-(n+\delta)}{\rm d}x.
\end{split}
\end{equation}
Note that if $x\in \R^n\setminus B$, then $|x-x_Q|\ge l(P)$ and 
therefore
$1+\frac{|x-x_Q|}{l(Q)}\ge \frac{l(P)}{l(Q)}$.
Using this we can estimate the right hand side of \eqref{eski} 
from above by
\begin{eqnarray*}
&&C_{T,\phi,\psi}|R|^{-1/2}l(Q)|Q|^{-1/2} 
l(Q)^{\delta/2}l(P)^{-\delta/2}\int_{\R^n} 
\bigg(1+\frac{|x-x_Q|}
{l(Q)}\bigg)^{-(n+\delta/2)}{\rm d}x\\
&&\le C_{T,\phi,\psi,\delta} 
|R|^{-1/2} |Q|^{1/2}l(Q)^{1+\delta/2}l(P)^{-\delta/2}.
\end{eqnarray*}
Denote $l(P)=2^{-\mu}$. Then  we get the following estimate for our 
second
term
\begin{eqnarray*}
&&\bigg(\frac{1}{|P|}\sum_{Q\subset P}
\big(|Q|^{-1/n}|\langle (1-\chi_B)\phi_R,
T^t(\phi_Q)\rangle|\big)^2\bigg)^{1/2}\\
&&\le 
C_{T,\phi,\psi,\delta}|R|^{-1/2}
l(P)^{-\delta/2}\bigg(\frac{1}{|P|}\sum_{Q\subset P}
|Q|l(Q)^{\delta}\bigg)^{1/2}\\
&&\le 
C_{T,\phi,\psi,\delta}|R|^{-1/2}
l(P)^{-\delta/2}\bigg(\frac{1}{|P|}\sum_{\nu=\mu}^\infty
\frac{|P|}{2^{-\nu n}}2^{-\nu n}2^{-\nu\delta}\bigg)^{1/2}\\
&&\le 
C_{T,\phi,\psi,\delta}
|R|^{-1/2}l(P)^{-\delta/2}\bigg(\sum_{\nu=\mu}^\infty
2^{-\nu\delta}\bigg)^{1/2}\\
&&\le C_{T,\phi,\psi,\delta}|R|^{-1/2}.
\end{eqnarray*}
Combining the uniform estimates  above, we see
that
\[
||T(\phi_R)||_{\dot F^{12}_\infty}\le C|R|^{-1/2}
\]
with $C$ independent of $R$. Multiplying both sides by 
$|R|^{1/2}/\phi(0)$, we see that
the family $\{T(\eta^j)\}_{j\in\N}$ is bounded in 
$\dot F^{12}_\infty$.

Recall that $(\dot F^{-1,2}_1)'\approx \dot F^{12}_\infty$;
for a proof, see \cite[pp. 79--80]{F-J1}. Denote
$X=\dot F^{-1,2}_1$. Then $X$ is separable
and Banach--Alaoglu theorem \cite[p. 70]{Rudin:1991} 
implies that there exists
a subsequence $\{\eta^{j_k}\}_{k\in\N}$ such that
$\lim_{k\to\infty}T(\eta^{j_k})= b\in \dot F^{12}_\infty$ in 
the weak* topology
of $\dot F^{12}_\infty$.
In particular, we have $\lim_{k\to\infty}\langle T(\eta^{j_k}),
\phi_Q\rangle = \langle b,\phi_Q\rangle$ for
all $Q\in\mathcal{D}$; on the other hand, we have for all 
$Q\in\mathcal{D}$
\[
\lim_{j\to\infty} \langle T(\eta^j),\phi_Q\rangle
=\int T^t(\phi_Q)=
\lim_{k\to\infty} \langle T(\eta^{j_k}),\phi_Q\rangle = 
\langle b,\phi_Q\rangle.
\]
The same conclusions hold true for $\psi_Q$'s 
and therefore  $T 1=b\in \dot F^{12}_\infty$.
\end{proof}

Recall that the $L^2$-boundedness of a Calder\'on--Zygmund 
operator implies its $L^p$-boundedness for $1<p<\infty$; the 
proof is based on a weak type $(1,1)$ estimate and interpolation
\cite[pp. 584--585]{grafakos}.
 Combining theorems \ref{full} and \ref{t1conv}
we immediately get the following analogous result:

\begin{cor}\label{k2}
Assume that $T\in\mathrm{SK}^{-1}(\delta)$ is a potential operator.
In other words, assume that it has a bounded extension
$\dot F^{\alpha 2}_2\to \dot F^{1+\alpha,2}_2$ for
all $-1\le\alpha\le 0$. 
Then
$T$ has a bounded
extension $\dot F^{\alpha 2}_p\to \dot F^{1+\alpha,2}_p$
for all $1<p<\infty$ and $-1\le\alpha\le 0$.
\end{cor}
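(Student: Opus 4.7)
The plan is a straightforward chaining of the two substantive theorems of this section, in that order. First I would invoke Theorem \ref{t1conv}: since by hypothesis $T\in\mathrm{SK}^{-1}(\delta)$ is a potential operator in the sense of Definition \ref{potopp} (that is, it is bounded $\dot F^{\alpha 2}_2\to\dot F^{1+\alpha,2}_2$ for every $-1\le\alpha\le 0$), that theorem directly produces $T1,T^t 1\in\dot F^{12}_\infty$. Having verified this condition together with the standing assumption $T\in\mathrm{SK}^{-1}(\delta)$, every hypothesis of Theorem \ref{full} is now satisfied, so applying it yields the desired bounded extension $\dot F^{\alpha 2}_p\to\dot F^{1+\alpha,2}_p$ for all $1<p<\infty$ and $-1\le\alpha\le 0$.

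There is essentially no obstacle to overcome here: all the substantive work has already been carried out in establishing Theorem \ref{t1conv} (a compactness/Banach--Alaoglu argument extracting $T1$ as a weak* limit in $\dot F^{12}_\infty$) and Theorem \ref{full} (the paraproduct reduction to the cancellative case $T1=0=T^t 1$ treated by the special $T1$ theorem, Theorem \ref{isolause}). The only conceptual point worth underlining is that this two-step passage is the precise analogue, in the potential-operator setting, of the classical implication ``$L^2$-boundedness $\Rightarrow$ $L^p$-boundedness for $1<p<\infty$'' for Calder\'on--Zygmund operators; in the classical setting the bridge is weak-type $(1,1)$ together with interpolation, whereas here the bridge is the sharp condition $T1,T^t 1\in \dot F^{12}_\infty$ which is both \emph{extracted} from $L^2$-type boundedness by Theorem \ref{t1conv} and \emph{sufficient} for $L^p$-type boundedness by Theorem \ref{full}.
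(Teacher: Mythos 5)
Your proposal is correct and is exactly the argument the paper intends: the corollary is stated as an immediate combination of Theorem \ref{t1conv} (extracting $T1,T^t1\in\dot F^{12}_\infty$ from the $L^2$-type boundedness) followed by Theorem \ref{full}. Nothing further is needed.
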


\subsection{About the construction of paraproduct operators}
\label{viissec}

As seen already the paraproduct operators 
establish a reduction from the full $T1$ theorem
to the  special $T1$ theorem.
Here we construct these 
paraproduct operators
to complete the proof of the full $T1$ theorem,
Theorem \ref{full}.
The treatment here follows
closely \cite{Wang} and for the convenience of the
reader we provide some of the shorter details here.

\begin{defn}\label{dyam}
Let $\Phi\in\mathcal{S}$ be  such that 
${\mathrm{supp}} \hat \Phi\subset B(0,1)$
and $\hat \Phi(0)=1$.
For $b\in \dot F^{12}_\infty$ we define the paraproduct
operator $\Pi_b$ by
\begin{equation}\label{suppenee}
\Pi_b(f) = \sum_{Q\in\mathcal{D}} \langle b,\phi_Q\rangle 
|Q|^{-1/2}\langle f,\Phi_Q\rangle \psi_Q,
\end{equation}
where $f\in \mathcal{S}_0$.
\end{defn}

\begin{thm}\label{khom}
The series \eqref{suppenee} defining 
$\Pi_b(f)$ converges unconditionally in the weak* topology of
$\mathcal{S}'/\mathcal{P}$ and absolutely pointwise
(and these limits coincide).
Also, we have $\Pi_b\in \mathrm{SK}^{-1}(1)$.
\end{thm}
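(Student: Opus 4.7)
The plan is to read off the integral kernel directly from the defining series, namely
\[
K(x,y)=\sum_{Q\in\mathcal{D}}\langle b,\phi_Q\rangle\,|Q|^{-1/2}\,\Phi_Q(y)\,\psi_Q(x),
\]
and to verify that this double series converges absolutely at every $(x,y)$ with $x\ne y$, satisfies \eqref{ee}--\eqref{kk} with $\delta=1$, and reproduces $\Pi_b(f)$ via Fubini. The starting input is the assumption $b\in\dot F^{12}_\infty$: by Theorem \ref{jatkuv} the sequence $\{\langle b,\phi_Q\rangle\}$ lies in $\dot f^{12}_\infty$, so in particular the pointwise bound $|\langle b,\phi_Q\rangle|\le C\,l(Q)\,|Q|^{1/2}$ holds, giving $|\langle b,\phi_Q\rangle||Q|^{-1/2}\le C\,l(Q)$ for every $Q$.

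For the size estimate I would proceed exactly as in the first part of the proof of Theorem \ref{kernel_theo}: using the Schwartz decay of $\Phi$ and $\psi$, estimate for any large $N$
\[
\sum_{Q\in\mathcal{D}_\nu}|\psi_Q(x)||\Phi_Q(y)|\le C\,2^{\nu n}\bigl(1+2^\nu|x-y|\bigr)^{-N}
\]
by the standard pointwise-bump/almost-orthogonality computation, and then insert the coefficient bound to reduce everything to $\sum_\nu 2^{\nu(n-1)}(1+2^\nu|x-y|)^{-N}$. Splitting the sum at the critical scale $2^\nu\sim |x-y|^{-1}$ produces $|K(x,y)|\le C|x-y|^{1-n}$, which is \eqref{ee}. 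Continuity of $K$ on $\{x\ne y\}$ follows from the uniform convergence of the series on compact subsets of $\{x\ne y\}$, obtained from the same estimates.

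For the two second-order regularity conditions \eqref{tt} and \eqref{kk}, note that \eqref{kk} uses smoothness of $\Phi_Q$ in $y$ while \eqref{tt} uses smoothness of $\psi_Q$ in $x$; in both cases one is working with pointwise Taylor-type estimates on Schwartz bumps and no cancellation of $\Phi$ or $\psi$ is invoked. For, say, \eqref{kk} one splits the sum at $|h|\sim l(Q)$: for $|h|\le l(Q)$, $|(\Delta_h\Phi_Q)(y)|\le C|h|^{1+\delta}\,l(Q)^{-1-\delta}|Q|^{-1/2}(1+|y-x_Q|/l(Q))^{-N}$ by the mean value theorem on the second difference (using any $0<\delta\le 1$); for $|h|>l(Q)$, use the triangle inequality on the three translates of $\Phi_Q$ and artificially insert $(|h|/l(Q))^{1+\delta}\ge 1$. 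Combining this with the same scale split as in the size estimate produces $|h|^{1+\delta}|x-y|^{-n-\delta}$, which is exactly the $\delta=1$ instance of \eqref{kk} (and \eqref{tt} is the transposed version). Thus $\Pi_b\in\mathrm{SK}^{-1}(1)$.

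It remains to match the series to the kernel. For $f\in\mathcal{S}_0$ the bound $|\langle f,\Phi_Q\rangle|\le\int|f||\Phi_Q|$ together with Fubini--Tonelli and the size estimate gives
\[
\sum_Q|\langle b,\phi_Q\rangle|\,|Q|^{-1/2}|\langle f,\Phi_Q\rangle|\,|\psi_Q(x)|\le\int|f(y)|\sum_Q|\langle b,\phi_Q\rangle|\,|Q|^{-1/2}|\Phi_Q(y)||\psi_Q(x)|\,{\rm d}y\le C\,\mathcal{I}^1(|f|)(x),
\]
which is finite for every $x$ by the Riesz potential bound \eqref{maximal}. This yields absolute pointwise convergence of the series, and Fubini then identifies the sum with $\int K(x,y)f(y){\rm d}y$. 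The same estimate applied to the pairing $\langle\Pi_b(f),\gamma\rangle$ for $\gamma\in\mathcal{S}_0$ gives unconditional convergence in the weak* topology of $\mathcal{S}'/\mathcal{P}$, and the two limits coincide. The main obstacle is the bookkeeping in the scale splitting for the second-difference estimate when $|h|>l(Q)$, where one must be sure to extract the full factor $|h|^{1+\delta}|x-y|^{-n-\delta}$ after three triangle-inequality translates; everything else is a variant of the kernel computation already performed in Theorem \ref{kernel_theo}.
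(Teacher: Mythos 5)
Your proposal is correct and follows essentially the same route as the paper: the paper also reads off the kernel $K(x,y)=\sum_Q\langle b,\phi_Q\rangle|Q|^{-1/2}\Phi_Q(y)\psi_Q(x)$, uses the coefficient bound $|\langle b,\phi_Q\rangle||Q|^{-1/2}\le C_{b,\phi}\,l(Q)$ coming from $b\in\dot F^{12}_\infty$, verifies \eqref{ee}--\eqref{kk} with $\delta=1$ by the same scale-by-scale computation (quoted there as ``computing as in the proof of Theorem \ref{kernel_theo}'', with the diagonal matrix $\pi_{Q,Q}$ dominated by $(l(P)\wedge l(Q))W_{P,Q}(3,3)$), and then gets absolute pointwise and unconditional weak* convergence plus the identification with the kernel via the maximal inequality \eqref{maximal} and dominated convergence, exactly as you do. The only cosmetic difference is that you exploit the diagonality to sum each scale directly rather than invoking the almost-diagonal machinery, which is a legitimate shortcut.
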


\begin{proof}
For the proof recall the notation from Section \ref{pkt}.
Denote $\pi_{Q,Q}=\langle b,\phi_Q\rangle |Q|^{-1/2}$
and $\pi_{P,Q}=0$ for every $P,Q\in\mathcal{D}$ such that 
$P\not=Q$.
It is easy to verify that $|\pi_{Q,Q}|\le C_{b,\phi}l(Q)$.
Therefore the following estimate is trivial for every
$P,Q\in\mathcal{D}$
\[
|\pi_{P,Q}| \le C_{b,\phi} \big(l(P)\wedge l(Q)\big) 
W_{P,Q}(3,3).
\]
Denote for $x\not=y$
\[
K(x,y):=\sum_{Q\in\mathcal{D}} \langle b,
\phi_Q\rangle |Q|^{-1/2}\Phi_Q(y)\psi_Q(x)=
\sum_{Q\in\mathcal{D}}
\sum_{P\in\mathcal{D}} \pi_{Q,P} \Phi_P(y)\psi_Q(x).
\]
Computing as in the proof  of Theorem \ref{kernel_theo} 
we see
that 
$K:\R^n\times \R^n\setminus \{(x,x)\}\to \C$ is 
continuous and
it satisfies properties \eqref{ee}--\eqref{kk} with 
$\delta=1$. 
Actually the same proof shows that, 
for any given $x\in \R^n$, we have the estimate
\begin{eqnarray*}
&&\sum_{Q\in\mathcal{D}} |\langle b,\phi_Q\rangle| |Q|^{-1/2} 
|\langle f,\Phi_Q\rangle| |\psi_Q(x)| \\
&&\le \int_{\R^n} \sum_{Q\in\mathcal{D}} 
|\langle b,\phi_Q\rangle| |Q|^{-1/2} |\Phi_Q(y)| |\psi_Q(x)||f(y)|
{\rm d}y\\
&&\le C\int_{\R^n}Ê\frac{|f(y)|}{|x-y|^{n-1}}{\rm d}y\le C||f||_\infty^{(n-1)/n}||f||_1^{1/n}<\infty;
\end{eqnarray*}
the second last inequality follows from the maximal inequality  
\eqref{maximal}.
Using dominated
convergence we see that $\Pi_b$ is associated with kernel $K$. 
Also we  see that the series \eqref{suppenee} converges 
absolutely pointwise
and unconditionally in $\mathcal{S}'/\mathcal{P}$.
\end{proof}

\begin{thm}\label{vastine}
Let $b\in \dot F^{12}_\infty$. Then
 $\Pi_b 1=b$ and $\Pi_b^t 1=0$.
\end{thm}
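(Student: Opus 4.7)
The plan is to verify each identity by unwinding Definition \ref{t1}: I must show that $\lim_{j\to\infty}\langle \Pi_b(\eta^j),\gamma\rangle=\langle b,\gamma\rangle$ and $\lim_{j\to\infty}\langle \Pi_b^t(\eta^j),\gamma\rangle=0$ for every $\gamma\in\{\phi_Q,\psi_Q\,:\,Q\in\mathcal{D}\}$. Both parts will exploit the same elementary fact: as $j\to\infty$, the regularizer $\eta^j\to 1$ pointwise and boundedly, while $\phi,\psi\in\mathcal{S}_0$ have all vanishing moments so that $\int\phi_Q=\int\psi_Q=0$. The normalizing condition $\hat\Phi(0)=1$ is precisely what makes $\Pi_b 1$ equal to $b$ rather than a multiple of $b$.

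For $\Pi_b 1=b$ I would first pair the defining series \eqref{suppenee} with the test function $\gamma$ to obtain
\[
\langle\Pi_b(\eta^j),\gamma\rangle=\sum_{P\in\mathcal{D}}\langle b,\phi_P\rangle\,|P|^{-1/2}\langle\eta^j,\Phi_P\rangle\,\langle\psi_P,\gamma\rangle.
\]
A change of variables in the inner pairing rewrites $|P|^{-1/2}\langle\eta^j,\Phi_P\rangle=\int \eta^j(x_P+2^{-\nu_P}u)\Phi(u)\,du$, which by dominated convergence tends to $\int\Phi$ (normalized to $1$) with uniform-in-$j,P$ upper bound $C=(\|\phi\|_\infty/|\phi(0)|)\|\Phi\|_1$. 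Combining this uniform bound with the coefficient estimate $|\langle b,\phi_P\rangle|\,|P|^{-1/2}\le C_{b,\phi}\,l(P)$ proved inside Theorem \ref{khom} and standard almost-orthogonality decay of $|\langle\psi_P,\gamma\rangle|$ in the relative position and scale of $P$ and $Q$, I obtain a summable dominant and may swap limit and sum. The resulting series $\sum_P\langle b,\phi_P\rangle\langle\psi_P,\gamma\rangle$ collapses to $\langle b,\gamma\rangle$ by the $\phi$-transform identity, Theorem \ref{phi_ide}.

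For $\Pi_b^t 1=0$ I would work with the kernel $K^t(x,y)=K(y,x)$ delivered by Theorem \ref{khom}. An application of Fubini gives
\[
\langle\Pi_b^t(\eta^j),\gamma\rangle=\int_{\R^n}\eta^j(y)\,G(y)\,dy,\qquad G(y):=\Pi_b(\gamma)(y)=\int_{\R^n}K(y,x)\gamma(x)\,dx.
\]
For $\gamma=\phi_R$ or $\gamma=\psi_R$, Lemma \ref{ar1} applied to $\Pi_b$ and its transpose furnishes the pointwise bound $|G(y)|\le C_{b,\phi,\psi}\,l(R)|R|^{-1/2}(1+|y-x_R|/l(R))^{-(n+1)}$, so $G\in L^1$. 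A second dominated convergence then reduces the claim to $\int G=0$. Inserting the absolutely convergent kernel expansion $K(y,x)=\sum_Q\langle b,\phi_Q\rangle|Q|^{-1/2}\Phi_Q(x)\psi_Q(y)$ from Theorem \ref{khom}, using $\int\psi_Q=0$ for every $Q$, and swapping sum and integral finishes the proof.

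The main obstacle is justifying the two interchanges of sum and integral, especially $\int G=0$: the vanishing moment of $\psi$ must be transported through a doubly infinite dyadic sum while controlling both spatial decay and scale separation. This is exactly the same circle of almost-orthogonality estimates already exploited in Section \ref{pkt}, so I expect to reduce it to the coefficient bound $|\langle b,\phi_Q\rangle|\,|Q|^{-1/2}\le C_{b,\phi}\,l(Q)$, to Fourier-support considerations for $|\langle\gamma,\Phi_Q\rangle|$ (annular support of $\hat\phi,\hat\psi$ versus compact support of $\hat\Phi$), and to Lemmas \ref{apu1} and \ref{apu3} for summing the resulting bilinear tail. No new machinery should be required beyond what was built for Theorem \ref{kernel_theo}.
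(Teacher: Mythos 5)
Your argument is correct, and for the first half it is essentially the paper's own proof: pair the series \eqref{suppenee} with $\gamma$, pass to the limit in $j$ using the uniform bound on $|P|^{-1/2}\langle\eta^j,\Phi_P\rangle$, and collapse $\sum_P\langle b,\phi_P\rangle\langle\psi_P,\gamma\rangle=\langle b,\gamma\rangle$ via Theorem \ref{phi_ide}. The only cosmetic difference there is that the paper kills all but three scales outright by the Fourier-support disjointness $\langle\psi_{Q_{\nu k}},\phi_P\rangle=0$ for $|\nu-\mu|>1$, whereas you dominate the full sum by almost-orthogonality decay; both justify the same interchange (and both gloss the same $\int\Phi$ versus $\hat\Phi(0)=1$ normalization). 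For $\Pi_b^t1=0$, however, your route is genuinely different. The paper again argues by spectral support: since $\mathrm{supp}\,\hat\Phi\subset B(0,1)$, only scales $\nu\ge\mu-3$ contribute to $\langle\phi_P,\Phi_{Q_{\nu k}}\rangle$, and for those scales $\langle\psi_{Q_{\nu k}},\eta^j\rangle=0$ once $j\ge 5-\mu$, so the pairing is identically zero for large $j$ --- a two-line vanishing argument. You instead write $\langle\Pi_b^t(\eta^j),\gamma\rangle=\int\eta^j\,\Pi_b(\gamma)$, use Lemma \ref{ar1} (applied to $\Pi_b$ and, via the transpose remark, to $\Pi_b^t\in\mathrm{SK}^{-1}(1)$) to get an $L^1$ envelope for $\Pi_b(\gamma)$, pass to the limit by dominated convergence --- which in effect re-derives the remark following Lemma \ref{ar1} --- and then show $\int\Pi_b(\gamma)=0$ by termwise integration using $\int\psi_Q=0$; the required sum--integral interchange is indeed controlled by the coefficient bound $|\langle b,\phi_Q\rangle|\le C\,l(Q)|Q|^{1/2}$ together with either the spectral restriction $\nu\gtrsim\rho$ for $\langle\gamma,\Phi_Q\rangle$ or the vanishing moments of $\gamma$ at coarse scales, so the gap you flag is closable with the estimates already in Section \ref{pkt}. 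The paper's argument is shorter and needs no summability check; yours is more robust (it does not use the compact spectral support of $\Phi$ in an essential way) and makes transparent that $\Pi_b^t1=0$ is exactly the statement that paraproduct outputs have vanishing mean, in line with the $H^1$-type reasoning used elsewhere in the paper.
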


\begin{proof}
Recall that $\eta^j(x)=\phi(x/2^j)/\phi(0)$ and fix 
$P\in \mathcal{D}_\mu$.
Then we use the inequality
\[
\sup_{Q\in \mathcal{D}} |Q|^{-1/2}|\langle \eta^j,\Phi_Q\rangle|
<\infty,\]
both conclusions of Theorem \ref{phi_ide},
and Theorem \ref{khom}
in order to obtain
\begin{eqnarray*}
\lim_{j\to\infty} \langle \Pi_b(\eta^j),\phi_P\rangle
&=&\lim_{j\to\infty} \sum_{\nu=\mu-1}^{\mu+1}
\sum_{k\in \Z^n} \langle b,\phi_{Q_{\nu k}}
\rangle |Q_{\nu k}|^{-1/2}\langle \eta^j,
\Phi_{Q_{\nu k}}\rangle \langle \psi_{Q_{\nu k}},
\phi_P\rangle\\
&=&\sum_{\nu=\mu-1}^{\mu+1}\sum_{k\in \Z^n} \langle b,
\phi_{Q_{\nu k}}\rangle \langle \psi_{Q_{\nu k}},
\phi_P\rangle=\langle b,\phi_P\rangle.
\end{eqnarray*}
On the other hand, we have 
\[
\lim_{j\to\infty} \langle \Pi_b^t(\eta^j),
\phi_P\rangle
=\lim_{j\to\infty} \sum_{\nu\ge \mu-3}\sum_{k\in \Z^n} \langle b,
\phi_{Q_{\nu k}}\rangle |Q_{\nu k}|^{-1/2}\langle \phi_P,
\Phi_{Q_{\nu k}}\rangle \langle \psi_{Q_{\nu k}},
\eta^j\rangle=0
\]
since $\langle \psi_{Q_{\nu k}},\eta^j\rangle = 0$ for 
$j\ge 5-\mu$ and $\nu \ge \mu-3$. These combined with similar 
computations
for $\psi$ imply the desired conclusion.
\end{proof}

The $\dot F^{\alpha 2}_p\to \dot F^{1+\alpha,2}_p$ 
boundedness of 
the paraproduct can be  reduced to the  boundedness of certain matrix 
on the corresponding sequence
spaces. This matrix factors as a product of two matrices 
$T_c$ and $G$, whose boundedness properties can then be 
established separately. 
This program is carried out in detail by \cite{Wang},
see
Corollary 3.3., Lemma 3.4., and the proof of
Theorem 4.1.(a) therein.

\begin{thm}\label{paraj2}
Let $b\in \dot F^{12}_\infty$.
If $1< p<\infty$ and $\alpha\le 0$, then
the paraproduct operator $\Pi_b$ has a bounded
extension $\dot F^{\alpha 2}_p\to \dot F^{1+\alpha,2}_p$.
\end{thm}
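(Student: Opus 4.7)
The plan is to follow the Wang--style reduction: pass from operator-level bounds to a matrix inequality on sequence spaces via the $\phi$-transform, and then factor that matrix into a smoothing piece and a Carleson-type diagonal. Since the series defining $\Pi_b$ already has the form $\Pi_b f = T_\psi s$ with
\[
s_Q = \langle b,\phi_Q\rangle\,|Q|^{-1/2}\langle f,\Phi_Q\rangle,\quad Q\in\mathcal{D},
\]
Theorem \ref{jatkuv} reduces the claim to the sequence-space estimate
\[
\|s\|_{\dot f^{1+\alpha,2}_p} \le C\,\|b\|_{\dot F^{12}_\infty}\,\|S_\phi f\|_{\dot f^{\alpha 2}_p}.
\]

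Expanding $\langle f,\Phi_Q\rangle$ via the reproducing formula of Theorem \ref{phi_ide}, I write $s = M\,(S_\phi f)$ with $M_{QP} = \langle b,\phi_Q\rangle\,|Q|^{-1/2}\langle\psi_P,\Phi_Q\rangle$, and factor $M = T_c \circ G$, where $T_c$ is the diagonal multiplication by $c_Q = \langle b,\phi_Q\rangle$ and $G_{QP} = |Q|^{-1/2}\langle\psi_P,\Phi_Q\rangle$. The Fourier support properties ($\mathrm{supp}\,\hat\Phi\subset B(0,1)$ and $\hat\psi$ supported in an annulus) force $G_{QP}$ to vanish unless $l(Q)\lesssim l(P)$, and in that regime direct computation gives rapid off-diagonal decay in $|x_P-x_Q|/l(P)$ together with favorable scale decay. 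A Schur-test and maximal-function argument (as in \cite[Corollary~3.3]{Wang}) then bounds $G$ on $\dot f^{\alpha 2}_p$ for every $\alpha\le 0$, via the vector-valued Fefferman--Stein inequality.

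For the diagonal $T_c$, the hypothesis $b \in \dot F^{12}_\infty$ translates, through $S_\phi$, to the Carleson condition
\[
\sup_{P\in\mathcal{D}} \frac{1}{|P|}\sum_{Q\subset P} |Q|^{-2/n}|c_Q|^2 \le C\,\|b\|_{\dot F^{12}_\infty}^2.
\]
Writing out the target norm with $t_Q = (G\,S_\phi f)_Q$,
\[
\|T_c t\|_{\dot f^{1+\alpha,2}_p}^p = \int\bigg(\sum_Q \big(|Q|^{-1/n}|c_Q|\big)^2\,\big(|Q|^{-\alpha/n-1/2}|t_Q|\big)^2 \chi_Q\bigg)^{p/2} dx,
\]
one recognizes this as the discrete analog of a tent-space integral against a Carleson measure of mass $\|c\|_{\dot f^{12}_\infty}^2$. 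A stopping-time argument---the dyadic analog of the $L^p$-boundedness of the Lusin area integral against a Carleson measure---then yields $\|T_c t\|_{\dot f^{1+\alpha,2}_p} \le C\|c\|_{\dot f^{12}_\infty}\|t\|_{\dot f^{\alpha 2}_p}$, and composition with the bound on $G$ closes the argument.

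The main obstacle is the final, Carleson-embedding step. For $\alpha = 0$ it is essentially the classical Carleson embedding; for $\alpha < 0$ the factor $|Q|^{-\alpha/n}$ amplifies contributions from small cubes, so the stopping-time argument must be adjusted to absorb this amplification into the $L^p$-norm of $\{(|Q|^{-\alpha/n-1/2}|t_Q|)\chi_Q\}$. This is precisely the content carried out in \cite[Lemma~3.4 and proof of Theorem~4.1(a)]{Wang}, to which I would refer for the remaining bookkeeping.
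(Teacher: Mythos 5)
Your proposal is correct and follows essentially the same route as the paper: the paper likewise reduces the operator bound via the $\phi$-transform to a matrix estimate on sequence spaces, factors that matrix as $T_c\circ G$ (diagonal Carleson part times a smoothing part), and defers the boundedness of each factor to Wang's Corollary 3.3, Lemma 3.4, and the proof of Theorem 4.1(a). Your outline fills in the same reduction, with the key Carleson-embedding step delegated to the same references, so there is nothing substantively different to compare.
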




Due to \cite[pp. 76--79]{F-J1} we have
$(\dot f^{\beta q}_p)' \approx \dot f^{-\beta q'}_{p'}$ for
$\beta\in\R$, $1\le p<\infty$, and $1\le q<\infty$.  This duality
combined with the previous theorem gives us:

\begin{cor}
Let $b\in \dot F^{12}_\infty$.
If $1<p<\infty$ and $\beta\ge -1$,
then the transposed paraproduct operator $\Pi_b^t$ has a bounded
extension $\dot F^{\beta 2}_p\to \dot F^{1+\beta,2}_p$.
\end{cor}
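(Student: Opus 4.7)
The plan is to derive the corollary by duality from Theorem~\ref{paraj2}. Given $\beta\ge-1$ and $1<p<\infty$, set $\alpha=-(1+\beta)\le 0$ and $q=p'\in(1,\infty)$. Theorem~\ref{paraj2}, applied with this pair $(\alpha,q)$, then yields a bounded extension $\Pi_b:\dot F^{\alpha 2}_q\to\dot F^{1+\alpha,2}_q$.

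The main preparatory step is to promote the stated sequence-space duality $(\dot f^{\gamma 2}_r)'\approx\dot f^{-\gamma,2}_{r'}$ (valid for $1<r<\infty$) to a duality $(\dot F^{\gamma 2}_r)'\approx\dot F^{-\gamma,2}_{r'}$ on the Triebel--Lizorkin spaces themselves. I would do this via the $\phi$-transform: by Theorem~\ref{jatkuv}, $S_\phi$ and $T_\psi$ are bounded in opposite directions with $T_\psi\circ S_\phi=\id$, so any $\Lambda\in(\dot F^{\gamma 2}_r)'$ is represented by the sequence-space functional $\Lambda\circ T_\psi\in(\dot f^{\gamma 2}_r)'$, and conversely. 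By Theorem~\ref{phi_ide}, the resulting bilinear pairing coincides with the distributional pairing $\langle f,g\rangle$ whenever $f,g\in\mathcal{S}_0$.

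Applying this identification on both sides of the bounded operator $\Pi_b:\dot F^{\alpha 2}_q\to\dot F^{1+\alpha,2}_q$, the Banach-space transpose of $\Pi_b$ extends boundedly from $(\dot F^{1+\alpha,2}_q)'\approx\dot F^{-(1+\alpha),2}_{q'}$ to $(\dot F^{\alpha 2}_q)'\approx\dot F^{-\alpha,2}_{q'}$. Substituting $-(1+\alpha)=\beta$, $-\alpha=1+\beta$, and $q'=p$ gives exactly $\Pi_b^t:\dot F^{\beta 2}_p\to\dot F^{1+\beta,2}_p$, as desired. The one subtlety I expect to be the main obstacle is the identification of this abstract Banach transpose, on test inputs, with the operator $\Pi_b^t$ defined in Section~3 by $\langle\Pi_b^t f,g\rangle=\langle f,\Pi_b g\rangle$; this follows from the compatibility of pairings recorded above together with density of finite-support sequences in $\dot f^{\gamma 2}_r$ (Remark~\ref{huhu}), which transfers through $T_\psi$ to a dense subspace of $\dot F^{\gamma 2}_r$ on which $\Pi_b$ is already defined a priori.
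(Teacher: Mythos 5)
Your argument is correct and is essentially the paper's own: the corollary is deduced by duality from Theorem~\ref{paraj2} using the Frazier--Jawerth duality $(\dot f^{\gamma 2}_r)'\approx \dot f^{-\gamma,2}_{r'}$, with the index bookkeeping $\alpha=-(1+\beta)\le 0$, $q=p'$ exactly as intended. Your extra care in transferring the sequence-space duality to the $\dot F$-spaces via $S_\phi$, $T_\psi$ and in identifying the Banach transpose with $\Pi_b^t$ on the dense span of the $\psi_Q$'s just fills in details the paper leaves implicit.
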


\section*{Acknowledgements}

During the time when research was conducted, the author was supported
by the Finnish Academy of Science and Letters, Vilho, Yrj\"o 
and  Kalle V\"ais\"al\"a Foundation
and by the Center of Excellence of Geometric Analysis 
and Mathematical Physics at the University of Helsinki. The author
wishes to thank Prof. Kari Astala, Prof. Tadeusz Iwaniec, and Dr. Tuomas Hyt\"onen
for advice.

\end{document}